\documentclass[11pt]{amsart}

\usepackage{amssymb, amsthm, amsmath}
\usepackage{amsfonts}
\usepackage{graphicx, comment}
\usepackage[small]{caption}
\usepackage{subcaption}
\usepackage{epsfig}
\usepackage{tikz, float}
\usepackage [english]{babel}
\usepackage [autostyle, english = american]{csquotes}
\MakeOuterQuote{"}

\usepackage{complexity}

\newcommand{\later}[1]{}
\newcommand{\old}[1]{}

\usepackage[utf8]{inputenc}
\usepackage{fullpage}
\usepackage{framed}

\usepackage{enumerate}
\usepackage{url}
\usepackage[breaklinks]{hyperref}
\hypersetup{
	colorlinks = true, 
	urlcolor = cyan, 
	linkcolor = teal, 
	citecolor = cyan 
}

\newtheorem{theorem}{Theorem}[section]
\newtheorem{lemma}[theorem]{Lemma}

\newtheorem{observation}[theorem]{Observation}
\newtheorem{definition}[theorem]{Definition}

\newtheorem{proposition}[theorem]{Proposition}
\newtheorem{corollary}[theorem]{Corollary}

\newcommand{\ie}{{i.e.}}
\newcommand{\eg}{{e.g.}}

\newcommand{\eps}{\varepsilon}
\newcommand{\vol}{{\rm vol}}

\newcommand{\Z}{\mathbb{Z}} 
\newcommand{\RR}{\mathbb{R}} 

\def\A{\mathcal A}

\def\C{\mathcal C}
\def\D{\mathcal D}

\usepackage{tikz}
\tikzstyle{vtx} = [circle, fill, inner sep=0.7]

\title{Rado's covering problem for cubes and balls: \\
  a semi-survey}

\thanks{The first named author would like to acknowledge the financial support
  of the Istituto Nazionale di Alta Matematica ``F. Severi".
Work by the second named author was supported in part by
ERC Advanced Grant `GeoScape' No. 882971 and by the Erd\H os Center.}

\author{Gian Maria Dall'Ara}
\address{Gian Maria Dall'Ara \newline Istituto Nazionale di Alta Matematica ``F. Severi"
  and Scuola Normale Superiore, Pisa, Italy}
\email{dallara@altamatematica.it}

\author{Adrian Dumitrescu}
\address{Adrian Dumitrescu \newline
Algoresearch L.L.C., Milwaukee, WI, USA, and 
Research Institute of the University of Bucharest, Romania, and 
Alfr\'ed R\'enyi Institute of Mathematics, Budapest, Hungary}
\email{ad.dumitrescu@algoresearch.org}

\date{\today}

\begin{document}
	
\begin{abstract} 
	What is the largest constant $c\in [0,1]$ with the property that every finite collection
        $\mathcal{C}$ of axis-parallel 
	squares in the plane admits a disjoint sub-collection $\mathcal{S}$ occupying at least a
        fraction $c$ of the area covered by $\C$? This problem was first raised by T.~Radó in 1928,
        who was motivated by a classical covering lemma in real analysis due to Vitali. R.~Rado later
        generalized the problem from axis-parallel squares in the plane to homothetic copies of any
        given convex body $K$ in $\RR^d$, where now we are looking for an optimal constant $F(K)$.  

        Our utmost interest is for cubes and balls in the high-dimensional regime
        $d\rightarrow \infty$. The estimates that we currently have for cubes are much more precise
        than those for balls: namely if $Q^d$ is a $d$-dimensional cube, then
        \[ (e^{-1}+o(1))\frac{2^{-d}}{d \log{d}} \leq F(Q^d)\leq 2^{-d},  \]
        while denoting $B^d$ a $d$-dimensional Euclidean ball, then
        \[ (1+\epsilon_d)3^{-d}\leq F(B^d)\leq 2.447^{-d}, \]
        where $\epsilon_d>0$ vanishes exponentially fast as $d\rightarrow \infty$.
        The latter upper bound is deduced here by using the Kabatiansky--Levenshtein bound for the
        sphere packing problem.  
	
	\medskip \noindent
	\textbf{\small Keywords}: Vitali covering lemma, greedy algorithm, Rado's covering problem,
	sphere packing, spherical code. 
		
\end{abstract}

\maketitle

\section{A short introduction} 

Every finite collection $\mathcal{C}$ of compact intervals on the real line admits a disjoint sub-collection
$\mathcal{S}$ occupying at least half of the total length covered by $\mathcal{C}$. This innocent-looking
statement solves the simplest instance of the Rado's covering problem in the title.\footnote{As it turns out,
  this is the only instance of the problem for which an exact solution is known.} The reader may find a proof
of this statement in Section \ref{sec:origin} below, or try to prove it on its own.

This paper surveys the existing literature on this problem, starting from its origins in early twentieth
century real analysis up to recent developments. This occupies Sections \ref{sec:origin} to
\ref{sec:surprise}. Then, in Sections \ref{sec:balls} and \ref{sec:lower}, we present a couple of new results,
highlighting in particular a connection with sphere packing. The main focus is in the asymptotic regime where
the dimension of the ambient Euclidean space tends to $\infty$. Remarks on algorithmic aspects and a list of
open problems are included in Section \ref{sec:remarks}.

\section{The origin story}\label{sec:origin}

In 1908, G.~Vitali wrote an innovative paper~\cite{Vitali1908}, where he proved a version of
the fundamental theorem of calculus in the (then) brand new Lebesgue theory of integration.
For this purpose, he introduced a new kind of tool to the analyst's kit, now known as a
\emph{covering lemma}.

Vitali's original covering lemma, in its finitary version, is the following theorem
(cf.~\cite[p.~233]{Vitali1908}), where we denote by $|E|$ the usual $d$-dimensional volume of a (measurable)
set $E\subseteq \RR^d$, and call \emph{axis-parallel cube} any set of the form
\[  Q(x,r)=[x_1-r, x_1+r]\times \cdots \times [x_d-r, x_d+r],\qquad x=(x_1,\ldots, x_d)\in \RR^d, \, r>0. \] 

\begin{theorem}[Original Vitali covering lemma]\label{thm:vitali}
	Any finite collection $\mathcal{C}$ of axis-parallel cubes in $\RR^d$ admits a disjoint
        sub-collection $\mathcal{S}$ such that  
\begin{equation} \label{eq:vitali} 
	|\cup_{Q\in \mathcal{S}}Q|\geq 3^{-d}|\cup_{Q\in \mathcal{C}}Q|. 
\end{equation}
\end{theorem}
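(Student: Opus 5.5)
The plan is to use the greedy algorithm by decreasing size. Order the cubes of $\mathcal{C}=\{Q(x_i,r_i)\}$ so that $r_1\geq r_2\geq \cdots \geq r_n$. Build $\mathcal{S}$ by scanning the list in this order: a cube is added to $\mathcal{S}$ if and only if it is disjoint from every cube already chosen. By construction $\mathcal{S}$ is a disjoint sub-collection, so only the volume inequality \eqref{eq:vitali} needs proof.

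The key geometric step is the following claim: every cube $Q(x,r)\in\mathcal{C}$ is contained in $Q(y,3s)$ for some $Q(y,s)\in\mathcal{S}$. If $Q(x,r)\in\mathcal{S}$, take $Q(y,s)=Q(x,r)$. Otherwise, $Q(x,r)$ was rejected, so it meets some cube $Q(y,s)$ that was selected earlier. Since cubes are processed in decreasing order of size, $s\geq r$. Work in the $\ell^\infty$ norm, for which $Q(x,r)$ is the closed ball of radius $r$ about $x$. Intersection gives $\|x-y\|_\infty\leq r+s$. Hence every $z\in Q(x,r)$ satisfies
\[\|z-y\|_\infty\leq r+(r+s)\leq 3s,\]
which proves $Q(x,r)\subseteq Q(y,3s)$.

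It follows that $\cup_{Q\in\mathcal{C}}Q\subseteq \cup_{Q(y,s)\in\mathcal{S}}Q(y,3s)$. Apply subadditivity of Lebesgue measure, then the scaling identity $|Q(y,3s)|=3^d|Q(y,s)|$, and finally the disjointness of $\mathcal{S}$:
\[|\cup_{Q\in\mathcal{C}}Q|\leq \sum_{Q(y,s)\in\mathcal{S}}3^d|Q(y,s)| = 3^d|\cup_{Q\in\mathcal{S}}Q|.\]
This is exactly \eqref{eq:vitali}.

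There is essentially no hard step. The only point needing care is that the blocking cube must be at least as large as the rejected one, and the decreasing-size ordering guarantees this. Ties in size and closed cubes that merely touch cause no trouble: touching cubes still satisfy $\|x-y\|_\infty\leq r+s$, so the triangle-inequality estimate above holds unchanged.
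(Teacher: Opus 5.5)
Your proposal is correct and follows essentially the same route as the paper: the greedy largest-first selection (your single scan in decreasing order is equivalent to the paper's ``pick a largest cube, discard everything meeting it, repeat''), the inclusion $Q'\subseteq 3Q$ of a rejected cube in the tripled blocking cube, and then subadditivity, volume scaling and disjointness. Your explicit $\ell^\infty$ triangle-inequality check of the inclusion is exactly the argument the paper appeals to via Fig.~\ref{fig:squares} and Lemma~\ref{lem:vitali_metric}.
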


E.g., the theorem states that, given any finite collection of axis-parallel cubes in $3$-space, one 
can always select some of them, so that no two cubes intersect and
  the selected cubes occupy at least $1/27$ of the volume of the original collection.

Vitali's constructive proof is a textbook example of a \emph{greedy algorithm} (of course, this terminology
did not exist in the first decade of the 20th century). Let us recall how it works.  

\begin{proof}[Proof of Theorem \ref{thm:vitali}]
  The selection of the desired sub-collection is achieved by the following procedure:
  \begin{enumerate} 
\item choose a largest cube in the collection [with ties broken arbitrarily],
\item remove from the collection all cubes intersecting the selected one, 
\item repeat until the collection is empty. 
  \end{enumerate}
It is evident that this algorithm produces a disjoint sub-collection. Moreover, any discarded cube $Q'$ must
intersect a \emph{larger selected cube} $Q$. In this case, $Q'$ is contained in $3Q$, that is, the cube with
same (bary)center and three times the side-length, see Fig.~\ref{fig:squares}\,(left).

\begin{figure}[htbp]
 \centering
 \includegraphics[scale=0.7]{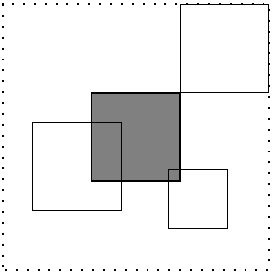}
\hspace{25mm}
 \includegraphics[scale=1.1]{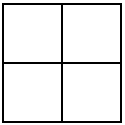}
 \caption{Left: a largest square (shaded) and three others that intersect it.
Right: four congruent squares sharing a common point.}
 \label{fig:squares}
\end{figure}

Thus, letting $\mathcal{S}$ be the sub-collection of selected cubes,
\[
|\cup_{Q'\in \mathcal{C}}Q'|\leq |\cup_{Q\in \mathcal{S}}3Q|\leq \sum_{Q\in \mathcal{S}}|3Q|.
\]
Now observe that if a set in $\RR^d$ is dilated by a factor $\lambda>0$, its volume gets multiplied by a factor
$\lambda^d$, so (taking into account the disjointness of $\mathcal{S}$) the rightmost term in the above chain
of inequalities is $3^d|\cup_{Q\in \mathcal{S}}Q|$. 
\end{proof} 

For the application that Vitali had in mind, the constant $3^{-d}$ plays no significant role,
assuming that the dimension $d$ is fixed: all that matters is that the fraction of volume
of the selected sub-collection is bounded from below by a positive constant
depending only on the dimension $d$. Yet, many mathematicians, when confronted
with the statement of Theorem~\ref{thm:vitali}, have been instinctively led to the question:
\emph{is the constant $3^{-d}$ optimal and, if not, what is the value, or at least the order of magnitude,
  of the best possible constant?} 

\smallskip
The first recorded instance of the above question seems to be a letter of T.~Rad\'o to W.~Sierpinski,
an excerpt of which appeared in print in 1928~\cite{Radob1928}. In his letter, the Hungarian mathematician
proves a sharp one-dimensional covering lemma with the constant improved from $\frac{1}{3}$ to $\frac{1}{2}$,
and remarks that his proof is "worth being explicitly pointed out, since it is based on an entirely
different principle from the usual one"\footnote{The original is in French.} (that is, Vitali's greedy argument).
Let us reproduce the result and its proof.  

\begin{theorem}[Sharp one-dimensional covering lemma]\label{thm:tibor_rado}
  Any finite collection $\mathcal{C}$ of compact intervals in $\RR$ admits a disjoint sub-collection $\mathcal{S}$
  such that 
\begin{equation}\label{eq:tibor_rado}
	|\cup_{I\in \mathcal{S}}I|\geq \frac{1}{2}|\cup_{I\in \mathcal{C}}I|. 
\end{equation}
The constant $\frac{1}{2}$ is sharp.
	\end{theorem}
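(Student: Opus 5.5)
The plan is to reduce to a minimal subcover and then pick every other interval. The union $U=\cup_{I\in\mathcal C}I$ is a finite union of disjoint compact intervals (its connected components), and both sides of \eqref{eq:tibor_rado} are additive over components. It therefore suffices to treat one component at a time and take the union of the disjoint sub-collections obtained. Fix a component $J=[a,b]$ and let $\mathcal C_J$ be the intervals of $\mathcal C$ contained in $J$.

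\emph{Step 1 (irredundant cover).} I will extract a sub-collection $I_1,\dots,I_m$ of $\mathcal C_J$ that still covers $J$ and in which no interval is contained in the union of the others. This is possible by repeatedly discarding a redundant interval, since the collection is finite. Write $I_k=[a_k,b_k]$. Irredundancy forces the left endpoints to be distinct, and after ordering $a_1<a_2<\dots<a_m$ it also forces $b_1<b_2<\dots<b_m$, since no interval may contain another. The key structural claim is that $I_k\cap I_{k+2}=\emptyset$, i.e.\ $b_k<a_{k+2}$. If instead $a_{k+2}\le b_k$, then $I_{k+1}=[a_{k+1},b_{k+1}]\subseteq[a_k,b_k]\cup[a_{k+2},b_{k+2}]$, because $a_k<a_{k+1}$, $b_{k+1}<b_{k+2}$, and the two outer intervals overlap. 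This contradicts irredundancy. Consequently each point of $J$ lies in at most two of the $I_k$.

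\emph{Step 2 (parity selection).} Let $\mathcal S_{\rm odd}=\{I_1,I_3,\dots\}$ and $\mathcal S_{\rm even}=\{I_2,I_4,\dots\}$. By Step 1, each of these is a disjoint family. Also
\[
|J|\le\sum_k|I_k|=\sum_{\rm odd}|I_k|+\sum_{\rm even}|I_k|,
\]
so one of the two families has total length at least $|J|/2$. Choosing that family in every component and taking the union gives a disjoint $\mathcal S$, because different components are disjoint. This $\mathcal S$ satisfies \eqref{eq:tibor_rado}.

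\emph{Sharpness.} Take $\mathcal C_n=\{[0,1]\}\cup\{[1,2]\}$, or more robustly $n$ intervals $[k,k+1]$ for $k=0,\dots,n-1$ together with the intervals $[k+\tfrac12-\eps, k+\tfrac32+\eps]$ in between. A cleaner example is a chain of overlapping intervals $[k,k+1+\eps]$, $k=0,\dots,n-1$. Any disjoint sub-collection can use at most every other one, so it has length at most about $\lceil n/2\rceil(1+\eps)$, while the union has length $n+\eps$. The ratio tends to $\tfrac12$ as $n\to\infty$ and $\eps\to0$. Even simpler, two intervals $[0,1]$ and $[1,2]$ sharing an endpoint already give exactly $\tfrac12$, since compact intervals that touch intersect. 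The main step requiring care is Step 1, the ``no three overlap'' structure of an irredundant cover, which I expect to be the only nontrivial point.
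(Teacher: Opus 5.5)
Your argument is correct and is essentially the paper's proof. Both pass to an irredundant subcollection, show that non-consecutive intervals are disjoint, and keep the better of the two parity classes. The only differences are cosmetic: you order by left endpoints and use $I_{k+1}\subseteq I_k\cup I_{k+2}$ where the paper orders by private points $\xi_j$, and your reduction to connected components is harmless but unnecessary.

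One small correction in the sharpness paragraph. The \emph{more robust} example with the added intervals $[k+\tfrac12-\eps,k+\tfrac32+\eps]$ does not work. The disjoint subfamily $[0,1],[\tfrac32-\eps,\tfrac52+\eps],[3,4],[\tfrac92-\eps,\tfrac{11}{2}+\eps],\dots$ covers about $2/3$ of the union. Your chain example and the pair $[0,1],[1,2]$ (the paper's example) are correct.
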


\begin{proof}
  Let $\mathcal{C}=\{I_1,\ldots, I_N\}$. We may assume without loss of generality that
no interval is contained in the union of the others (by removing the covered intervals). 
 Equivalently, each interval $I_j$ contains a point $\xi_j$ that lies in no other interval of $\mathcal{C}$.
 We may also assume that the labels of the intervals are chosen so that $\xi_1<\xi_2<\ldots<\xi_N$.
 If two indices $i<j$ are not adjacent (that is, if $j>i+1$), then the corresponding intervals
 $I_i$ and $I_j$ must be disjoint, otherwise their union would cover $\xi_k$ for each $k$ between
 $i$ and $j$, which is not the case by assumption. In particular, the two sub-collections
\[ \mathcal{C}_0=\{I_j\colon\, j \, \textrm{even}\}, \qquad \mathcal{C}_1=\{I_j\colon\, j \, \textrm{odd}\}
\] are disjoint. One of the two must cover at least a half of (the measure of) $\cup_jI_j$.
This proves~\eqref{eq:tibor_rado}.
For the sharpness, consider $\mathcal{C}=\{[0,1], [1,2]\}$; or a larger family of this kind.
\end{proof}

The above elementary proof indeed rests on a fundamentally different principle. As highlighted in the next section,
Theorem~\ref{thm:vitali} depends solely on volume scaling and the triangle inequality,
while Rad\'o's argument exploits the order (or topological) structure of the real line.  

Moreover, the proof of Theorem~\ref{thm:tibor_rado} has a "global" quality: while Vitali's procedure makes at
each step a locally optimal (a.k.a.~"greedy") choice, Rad\'o's argument prepares two candidate sub-collections,
at least one of which attains the desired bound. Deciding which of the two does the job requires computing
their total measures. 

\begin{figure}[htbp]
 \centering
 \includegraphics[scale=0.8]{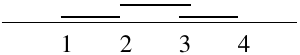}
 \caption{The family of intervals $[0,1], [2,3+\eps], [3,4]$ on the line, where Rad\'o's strategy
   outperforms Vitali's.}
 \label{fig:line}
\end{figure}

In his letter to Sierpinski, Rad\'o also raises the problem of proving an analogous statement in two dimensions,
that is, determining the best possible constant $c$ for which any finite collection $\mathcal{C}$ of
axis-parallel squares in $\RR^2$ admits a disjoint sub-collection $\mathcal{S}$ such that  
\begin{equation}\label{eq:tibor_rado_conjecture}
	|\cup_{Q\in \mathcal{S}}Q|\geq c|\cup_{Q\in \mathcal{C}}Q|. 
\end{equation}
He also claims that the above inequality is "probably" true with $c=\frac{1}{4}$. Notice that the constant
cannot be larger than $\frac{1}{4}$, as the collection of squares in Fig.~\ref{fig:squares}\,(right) shows.

The role of the topology of $\RR$ in the proof of Theorem~\ref{thm:tibor_rado} led Rad\'o to state that "if the
[two-dimensional] problem seems to deserve interest, it is because of the important topological facts on which
its solution appears to depend." Before describing progress on Rad\'o's problem, let us place it in a more
general context.

\section{The Vitali covering lemma for a general convex body}\label{sec:general_bodies}

Inspecting the proof of Theorem~\ref{thm:vitali}, one is led to a few considerations. First of all,
axis-parallel cubes are the (closed) balls with respect to the $\ell^\infty$ norm in $\RR^d$:
\[ 
Q(x,r)=\{y\in \RR^d\colon\, \lVert y-x\rVert_{\ell^\infty}\leq r\}, 
\]
where \[
 \lVert x\rVert_{\ell^\infty}=\max_{j=1,\ldots, d}|x_j|. 
\]
The crucial inclusion $Q'\subseteq 3Q$ in the proof of the covering lemma is an instance of the following
general fact: \emph{if two balls $B(x,r)$ and $B(y,s)$ in a metric space intersect, and $s\leq r$, then
  $B(y,s)\subseteq B(x,3r)$}. This is of course an immediate corollary of the triangle inequality. It leads to
a metric space statement, which is one of the results that is nowadays customarily referred to as the "Vitali
covering lemma" (see, e.g.,~\cite[Lemma 7.3]{Rudin1987}).  

\begin{lemma}\label{lem:vitali_metric}
	Any finite collection $\mathcal{C}$ of closed balls in a metric space admits a disjoint sub-collection
        $\mathcal{S}$ such that $\cup_{Q\in \mathcal{C}}B\subseteq \cup_{Q\in \mathcal{S}}3B$. 
\end{lemma}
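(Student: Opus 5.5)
The plan is to run Vitali's greedy procedure from the proof of Theorem~\ref{thm:vitali} verbatim in the metric setting. One subtlety comes first: in a general metric space a "ball" $B$ does not determine its center and radius as a set. So I will regard each element of $\mathcal{C}$ as a pair $(x_B,r_B)$ with $B=B(x_B,r_B)=\{y\colon d(y,x_B)\le r_B\}$. The enlarged ball $3B$ means $B(x_B,3r_B)$, defined using these fixed data.

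The procedure is as follows. Order or scan the finite collection $\mathcal{C}$. At each step, pick an element $B$ of maximal radius $r_B$ among the remaining balls, breaking ties arbitrarily, and put it into $\mathcal{S}$. Then discard from the remaining collection every ball that intersects $B$, including $B$ itself. Since $\mathcal{C}$ is finite and each step removes at least one ball, the process terminates. Disjointness of $\mathcal{S}$ is immediate: any ball selected later survived the removal step of every earlier selected ball, so it meets none of them.

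For the covering property, take any $B'\in\mathcal{C}$. It was removed at some step, at which a ball $B\in\mathcal{S}$ was chosen with $B'\cap B\neq\emptyset$. Either $B'=B$, in which case it is trivially covered since $B\subseteq 3B$, or $B'$ was still available when $B$ was chosen, so $r_{B'}\le r_B$ by maximality. In the latter case, pick $z\in B\cap B'$. For any $y\in B'$ the triangle inequality gives
\[
d(y,x_B)\le d(y,x_{B'})+d(x_{B'},z)+d(z,x_B)\le r_{B'}+r_{B'}+r_B\le 3r_B,
\]
so $B'\subseteq 3B$. Taking the union over $B'\in\mathcal{C}$ yields $\cup_{B\in\mathcal{C}}B\subseteq\cup_{B\in\mathcal{S}}3B$.

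There is no real obstacle here. The only point requiring care is the bookkeeping of centers and radii mentioned at the start, so that "largest" and "$3B$" are well defined. Finiteness of $\mathcal{C}$ guarantees that a maximal radius exists at each step and that the procedure terminates.
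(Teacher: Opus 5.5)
Your proposal is correct and follows the paper's route: Vitali's greedy selection of a largest remaining ball, combined with the triangle-inequality fact that if $B(y,s)$ meets $B(x,r)$ with $s\le r$ then $B(y,s)\subseteq B(x,3r)$. Fixing a center and radius for each ball beforehand is the same point the paper makes in its footnote, namely that the statement holds for any such choice.
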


Here $3B$ is the ball with same center as $B$ and three times the radius.\footnote{In a completely general
  metric space, the center and radius of a ball, and hence $3B$, are not uniquely defined: the statement holds
  for \emph{any} such choice.}  

\smallskip
Let now $\lVert \cdot \rVert$ be any norm on $\RR^d$, or equivalently let \[K=\{x\in \RR^d\colon\, \lVert
x\rVert\leq 1\}\] be its unit ball, which is an arbitrary \emph{symmetric convex body} (i.e., a compact convex
set with non-empty interior, which is centrally symmetric with respect to the origin). Then the balls in the
metric space $(\RR^d, \lVert \cdot\rVert)$ are the homothetic copies $x+rK$ of $K$ ($x\in \RR^d$, $r>0$).  

Combining Lemma~\ref{lem:vitali_metric} with the volume scaling property $|x+rK|=r^d|K|$ (in other words,
repeating the proof of Theorem~\ref{thm:vitali}), we obtain the following generalization of the original
Vitali covering lemma.  

\begin{theorem}[Vitali covering lemma for symmetric convex bodies]\label{thm:vitali_norm}
  Let $K\subseteq \RR^d$ be a symmetric convex body. Any finite collection $\mathcal{C}$
  of homothetic copies of $K$ admits a disjoint sub-collection $\mathcal{S}$ such that  
	\begin{equation}\label{eq:vitali_norm}
		|\cup_{K'\in \mathcal{S}}K'|\geq 3^{-d}|\cup_{K'\in \mathcal{C}}K'|. 
	\end{equation}
\end{theorem}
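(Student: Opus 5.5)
The plan is to repeat the proof of Theorem~\ref{thm:vitali} in the normed space $(\RR^d,\lVert\cdot\rVert)$ whose unit ball is $K$. That space is a metric space, and its closed balls are exactly the homothetic copies $x+rK$ with $x\in\RR^d$ and $r>0$.

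First I fix the parametrization. Because $K$ is centrally symmetric and has non-empty interior, each homothetic copy $K'$ of $K$ can be written as $K'=x+rK=\{y\colon \lVert y-x\rVert\leq r\}$. Here $x$ is the center of symmetry of $K'$ and $r>0$ is its scaling factor. The pair $(x,r)$ is uniquely determined by $K'$: the center of symmetry of a convex body is unique, and $r$ is fixed by the volume via $|K'|=r^d|K|$ with $|K|>0$. So the dilate $3K'=x+3rK$ is well defined. This settles the ambiguity mentioned in the footnote to Lemma~\ref{lem:vitali_metric}.

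Next I apply Lemma~\ref{lem:vitali_metric} to the finite family $\mathcal{C}$, viewed as closed balls in $(\RR^d,\lVert\cdot\rVert)$. This gives a disjoint sub-collection $\mathcal{S}$ with $\cup_{K'\in\mathcal{C}}K'\subseteq\cup_{K'\in\mathcal{S}}3K'$. If one wants the argument self-contained, the greedy procedure works directly. Repeatedly select a copy of largest $r$, then discard every copy meeting it. Suppose a discarded $y+sK$ meets a selected $x+rK$ with $s\leq r$. For any $z$ in the intersection and any $w\in y+sK$, the triangle inequality gives
\[
\lVert w-x\rVert\leq\lVert w-y\rVert+\lVert y-z\rVert+\lVert z-x\rVert\leq 2s+r\leq 3r,
\]
so $w\in x+3rK$. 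The middle term uses symmetry of the norm, $\lVert y-z\rVert=\lVert z-y\rVert\leq s$, which is exactly where central symmetry of $K$ enters.

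Finally I compare volumes. Using monotonicity and subadditivity of Lebesgue measure, then the scaling $|x+3rK|=3^d|x+rK|$, then disjointness of $\mathcal{S}$:
\[
|\cup_{K'\in\mathcal{C}}K'|\leq\sum_{K'\in\mathcal{S}}|3K'|=3^d\sum_{K'\in\mathcal{S}}|K'|=3^d|\cup_{K'\in\mathcal{S}}K'|.
\]
This is inequality~\eqref{eq:vitali_norm}.

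There is no real obstacle. The only point needing care is the first step: checking that homothetic copies of a symmetric convex body are precisely the metric balls of a norm, with a well-defined center and radius. This follows from the standard fact that the Minkowski functional of a symmetric convex body with non-empty interior is a norm.
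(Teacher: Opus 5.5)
Your proposal is correct and follows the paper's route. The paper likewise views homothetic copies of $K$ as closed balls of the norm with unit ball $K$, applies Lemma~\ref{lem:vitali_metric} (equivalently, repeats the greedy argument of Theorem~\ref{thm:vitali}), and concludes using $|x+3rK|=3^d|x+rK|$ together with the disjointness of $\mathcal{S}$.
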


As in the case of axis-parallel cubes, the problem of determining the best possible constant in Theorem
~\ref{thm:vitali_norm} presents itself. Indeed, this general "covering problem" has been introduced and
systematically investigated by R.~Rado in a 1949 paper~\cite{Rado1949I},
and its sequels~\cite{Rado1951II, Rado1968III}. Rado does not mention Vitali's work or Rad\'o's letter
(he attributes the greedy argument in the case of cubes to J.~C.~Burkill),
but considerations of historical character apart, he recognized the appropriate general framework
for these matters. The following definition comes from~\cite{Rado1949I}.  

\begin{definition}\label{def:F}
If $K$ is a symmetric convex body\footnote{In fact, Rado defined and studied the quantity $F(K)$ for $K$ not
  necessarily symmetric, but we won't consider this more general setting here.} in $\RR^d$,
let $F(K)$ denote the largest constant $c\geq 0$ for which the following statement holds true: any
finite collection $\mathcal{C}$ of homothetic copies of $K$ admits a disjoint sub-collection $\mathcal{S}$
such that  
\[
	|\cup_{K'\in \mathcal{S}}K'|\geq c|\cup_{K'\in \mathcal{C}}K'|. 
\]
\end{definition}

As one may easily check, the quantity $F(K)$ is invariant under the action of the general linear group $GL_d(\RR)$,
that is, $F(T(K))=F(K)$ for all $T\in GL_d(\RR)$ (notice that $T(K)$ is also a symmetric convex body).

\smallskip
By Theorem~\ref{thm:vitali_norm}, we have
\begin{equation} \label{eq:greedy_lower_bound}
F(K)\geq 3^{-d}
\end{equation}
for every symmetric convex body $K$.  
Rado showed that a minuscule improvement over this lower bound is possible, uniformly over all
$d$-dimensional symmetric convex bodies (Theorem 8 of~\cite{Rado1949I}). This may be achieved by
slightly modifying the greedy algorithm in the proof of Theorem~\ref{thm:vitali}; or, more
precisely, its adaptation to an arbitrary symmetric convex body $K$. The idea is roughly as follows.
At the current step of the algorithm, we have a largest homothetic copy $K'$ among the ones left
in the collection and a sub-collection $\mathcal{C}_{K'}$ of homothetic copies $K''$ intersecting~$K'$.
If $|\cup_{K''\in \mathcal{C}_{K'}}K''|$ is almost as large as $|3K'|$, then we may find two
disjoint sets in $\mathcal{C}_{K'}$ that are almost as large as $K'$, and choosing them is more
convenient than choosing $K'$. Otherwise, simply choose $K'$. Quantifying the "almost as large"
appropriately, Rado proved that
\begin{equation}\label{eq:minuscule_rado} 
F(K) \geq 3^{-d}(1+\epsilon_d), \qquad \text{with  } \epsilon_d=7^{-d}(d+2)^{-d^2-d} 
\end{equation}
for every symmetric convex body $K$. \smallskip 

On the other hand, an appropriate generalization of the construction in Fig.~\ref{fig:squares}\,(right)
yields the upper bound
\begin{equation}\label{eq:easy_upper_bound} 
F(K)\leq 2^{-d}.
\end{equation}
Indeed, consider the collection $\mathcal{C}$ of \emph{all translates of $K$ containing, say, the origin}.
The union of this collection is $2K$, and so its volume is $2^d|K|$.\footnote{This fact is
most easily seen observing that $\mathcal{C}$ is the collection of all balls of unit radius
(with respect to the norm corresponding to $K$) and center in $K$.}
Since a non-trivial disjoint sub-collection $\mathcal{S}$ of $\mathcal{C}$ must consist
of a single translate of $K$, the volume fraction occupied by $\mathcal{S}$ is equal to~$2^{-d}$.
This proves~\eqref{eq:easy_upper_bound}.\footnote{The reader may notice that $\mathcal{C}$ is
an infinite collection of balls, but this limitation is overcome by considering finite sub-collections
occupying any prescribed fraction of the volume of $\mathcal{C}$.} 

\smallskip
Thus, we have the following \emph{generalized Rad\'o's problem}, which is also referred to as
\emph{Rado's  covering problem} (e.g., by Wikipedia~\cite{Wikipedia}):
determine, or more realistically estimate, the
value of the constants $F(K)$. We remark in passing that we decided to stick to the established
"covering" terminology for this problem, even though one may argue that the questions under
consideration have the flavor of "packing problems" (see, e.g., Section D of~\cite{Croft_Falconer_Guy1991}),
as remarked by R.~Rado himself (see p.~233 of~\cite{Rado1949I}).
This point of view is especially pertinent in connection with our forthcoming
discussion of the case of Euclidean balls in Section \ref{sec:balls}.

In this paper, our main focus will be on the asymptotic behavior of these constants in
the high dimensional regime,
where the lower bound~\eqref{eq:greedy_lower_bound} (or~\eqref{eq:minuscule_rado})
and the upper bound~\eqref{eq:easy_upper_bound} are exponentially separated. 

\smallskip
Two cases that seem to deserve attention are \[K=Q^d:=[-1,1]^d,\] the $d$-dimensional cube,
and \[K=B^d:=\{x\in \RR^d\colon\, \sum_{j=1}^dx_j^2\leq 1\},\] the $d$-dimensional Euclidean ball.
As subsequently shown, the constants $F(Q^d)$ and $F(B^d)$ behave very differently in the limit $d\rightarrow \infty$.
In the case of cubes, the exponential rate of decay of the constants is now understood, while for balls this
seems to be a difficult and still widely open problem. In the next section we  start with the case of cubes.

\smallskip
From an opposite direction, it is worth making the following observation showing that
the condition of ``homothetic'' in Theorem~\ref{thm:vitali_norm} cannot be dropped.
Its verification is left to the reader.

\begin{observation} \label{obs:eps} 
Fix $d \geq 2$. For every $\eps>0$, there exists a convex body $K\subseteq \RR^d$ and a finite
collection $\C$ of similar copies of $K$ such that for any disjoint sub-collection $\mathcal{S}$ we have
\[ |\cup_{K \in \mathcal{S}} K | \leq \eps |\cup_{K \in \mathcal{C}}K|.  \]
Moreover, this holds with collections $\C$ of congruent bodies.
\end{observation}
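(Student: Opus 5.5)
The idea is to take $K$ very thin and elongated, so that a single point can be covered by many rotated copies of $K$ with small total overlap. Any disjoint sub-collection then picks essentially one copy per ``bundle'', while the union of the bundle is much larger.

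\textbf{Construction in the plane.} Fix $n$ large and let $K$ be a thin rectangle $[-1,1]\times[-\delta,\delta]$, with $\delta\ll 1/n$. Let $\C$ consist of the $n$ rotations of $K$ about the origin by the angles $k\pi/n$, $k=0,\dots,n-1$. These are congruent copies, hence also similar copies. Every two members of $\C$ contain the origin, so they intersect, and any disjoint sub-collection $\mathcal{S}$ has at most one element. Thus $|\cup_{\mathcal{S}}|\le |K|=4\delta$.

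\textbf{Estimating the union.} Two rotated copies with angle $\theta$ between them intersect in a rhombus-like region of area $O(\delta^2/\sin\theta)$. Summing over pairs gives total pairwise overlap at most
\[
\sum_{j<k} O\Big(\frac{\delta^2}{\sin(\pi|j-k|/n)}\Big)=O\big(\delta^2 n^2\log n\big).
\]
Alternatively, the union contains the annulus $\{1/2\le |x|\le 1\}$ intersected with the strips, and these pieces are nearly disjoint once $\delta\ll 1/n$. In either case $|\cup_{\C}|\ge 4n\delta - O(\delta^2n^2\log n)\ge 2n\delta$ when $\delta\le c/(n\log n)$. The ratio is therefore at most $2/n<\eps$ once $n>2/\eps$.

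\textbf{Higher dimensions.} For $d\ge 3$, take $K=[-1,1]\times[-\delta,\delta]^{d-1}$ and rotations in the $x_1x_2$-plane fixing the remaining coordinates. The planar computation multiplies by $(2\delta)^{d-2}$ on both sides, so the same ratio bound holds.

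\textbf{Main obstacle.} The only real work is the overlap estimate. The cleanest route is a direct computation: the intersection of two strips of width $2\delta$ crossing at angle $\theta$ is a parallelogram of area $4\delta^2/\sin\theta$. The resulting harmonic-type sum is then harmless. One could avoid it entirely by restricting to the region $|x|\ge 1/2$: there two strips meeting at angle $\theta\ge\pi/n$ are disjoint when $\delta<\tfrac14\sin(\pi/n)$. Each copy then contributes disjoint area at least $2\delta$, giving $|\cup_{\C}|\ge 2n\delta$ directly. I would use this simpler version.
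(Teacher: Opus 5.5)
Your proof is correct and complete. The paper gives no argument for this observation; it says only that "its verification is left to the reader." Your construction supplies a full verification and is the natural one to intend: a pinwheel of $n$ thin congruent rectangles through a common point, with the union bounded below by the pairwise disjoint pieces outside the disk of radius $1/2$, and the case $d\ge 3$ obtained by taking the product with $[-\delta,\delta]^{d-2}$.

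One point should be stated explicitly. The angle in your disjointness criterion must be read as the acute angle $\phi\in[\pi/n,\pi/2]$ between the two lines. With that reading, the farthest point of the crossing rhombus lies at distance $\delta/\sin(\phi/2)\le 2\delta/\sin\phi<1/2$ whenever $\delta<\frac14\sin(\pi/n)$, which confirms your threshold.
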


\section{Progress for axis-parallel cubes}

The first progress made on these matters (after T.~Rad\'o's letter) is contained
in the short 1940 paper~\cite{Sokolin1940} by the Soviet mathematician A.~S.~Sokolin,
who recalls Rad\'o's conjecture (sic) and generalizes it to an arbitrary number of dimensions,
namely he states the conjecture that
\begin{equation}\label{eq:F-false} 
F(Q^d)=2^{-d}.
\end{equation}
Recall that $Q^d=[-1,1]^d$. He then gives a proof of a special case, where one limits
consideration to collections of \emph{congruent axis-parallel cubes}. A generalization of the same special
case, with the same proof, also appeared later in the already cited paper~\cite{Rado1949I} of R.~Rado, who
gives credit for this specific contribution to A.~S.~Besicovitch (see p.~233 and p.~254 of~\cite{Rado1949I}).

\smallskip
Before proceeding, it is convenient to introduce a variant of Definition \ref{def:F}, also coming from
R.~Rado's paper~\cite{Rado1949I}. 
 
\begin{definition}\label{def:f}
If $K$ is a symmetric convex body in $\RR^d$, let $f(K)$ denote the largest
constant $c\geq 0$ for which the following statement holds true: any finite collection $\mathcal{C}$
of congruent homothetic copies of $K$ admits a disjoint sub-collection $\mathcal{S}$ such that  
	\[
	|\cup_{K'\in \mathcal{S}}K'|\geq c|\cup_{K'\in \mathcal{C}}K'|. 
	\]
\end{definition}

In other words, $f(K)$ is defined considering only collections of the form \[\mathcal{C}=\{x_1+rK, \ldots,
x_N+rK\},\] where $x_1,\ldots, x_N\in \RR^d$ and $r>0$. By scale-invariance considerations, one may take $r=1$,
that is, one may consider only translates of $K$. Exactly as $F(K)$,
the quantity $f(K)$ is invariant under $GL_d(\RR)$. Clearly,
\begin{equation}\label{eq:fandF}
  f(K)\geq F(K).
  \end{equation} 
Rado (Theorem 8 of~\cite{Rado1949I}) proved a lower bound on $f(K)$ that is slightly better than
what one gets combining~\eqref{eq:fandF} and~\eqref{eq:minuscule_rado}. Again, one modifies the
Vitali greedy algorithm, but in a somewhat cleaner way (compared to the case of $F(K)$): at each
step one chooses a "boundary" set from the given collection, namely one that admits a supporting
hyperplane leaving the whole collection on one side; this is a variant of the "sweep line/hyperplane"
argument in computational geometry. It still yields only a modest improvement
over the Vitali bound:
\begin{equation}\label{eq:sweep} 
  f(K)\geq (3^d-2^{d-1})^{-1}.
\end{equation}

Moreover, \begin{equation}\label{eq:easy_upper_bound_2}
f(K)\leq 2^{-d}, 
\end{equation}
because the example in Section \ref{sec:general_bodies} proving~\eqref{eq:easy_upper_bound} consists of
translates of $K$. The partial progress on Rad\'o's problem alluded to above closes, in the case of cubes,
the exponential gap between~\eqref{eq:sweep} and~\eqref{eq:easy_upper_bound_2}, by determining the exact value
of the constant. 

\begin{theorem}[Sharp covering lemma for congruent axis-parallel cubes]\label{thm:sharp_covering_congruent_cubes}
  $f(Q^d)=2^{-d}$. 
\end{theorem}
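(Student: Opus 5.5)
The upper bound $f(Q^d)\leq 2^{-d}$ is already given by \eqref{eq:easy_upper_bound_2}, so the plan is to prove the lower bound $f(Q^d)\geq 2^{-d}$. By scale invariance we take all cubes to be translates of $[0,1]^d$, say $x_1+[0,1]^d,\ldots,x_N+[0,1]^d$. The idea is a grid/averaging argument in the spirit of Rad\'o's parity trick from Theorem~\ref{thm:tibor_rado}. Fix a shift $t\in[0,1)^d$ and consider the lattice of points $t+\Z^d$. Each unit cube $x_j+[0,1]^d$ contains at least one point of $t+\Z^d$. Generically (for almost every $t$) it contains exactly one, namely $p_j(t)$, the unique lattice point with $p_j(t)-x_j\in[0,1)^d$.

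Next we group the cubes according to their lattice point. Two cubes containing the same lattice point $p$ may overlap, but cubes with different lattice points $p\neq p'$ can still intersect, so we need a further coloring. Partition $\Z^d$ into $2^d$ classes by parity of coordinates, $\epsilon\in\{0,1\}^d$. Within the class of lattice points $p=t+m$ with $m\equiv\epsilon \pmod 2$, any two distinct such points differ by at least $2$ in some coordinate. A unit cube containing $p$ lies inside $p+(-1,1)^d$ (open, generically), and these open boxes are pairwise disjoint for distinct points in the same parity class. So, for each parity class $\epsilon$ and each lattice point $p$ in that class that lies in some cube, we pick one cube containing $p$. This gives a disjoint subcollection $\mathcal{S}_{t,\epsilon}$.

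It remains to show that some choice of $(t,\epsilon)$ and of representatives captures a $2^{-d}$ fraction of $U=\cup_j (x_j+[0,1]^d)$. Summing over $\epsilon$, the union of all picked cubes over all classes is $\bigcup_{p} C_p$, where $C_p$ is the chosen cube containing $p$. Its volume is at least $\sum_p |C_p\cap \text{cell}(p)|$, where $\text{cell}(p)=p+[-1,0)^d$ partitions $\RR^d$. A cell's intersection with $U$ is not covered by a single cube, though, so this needs care. The cleaner route is to average: for a point $y\in U$, lying in cube $x_j+[0,1]^d$, we want that for a random $t$ the point $y$ is covered by a selected cube with probability related to $1$. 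A simpler alternative is to choose the representatives cleverly. For each lattice point $p$, the cubes containing $p$ all lie in $p+[-1,1]^d$, and their union restricted to each of the $2^d$ orthants around $p$ is a staircase. Then $|U|\leq\sum_p |U_p|$, where $U_p$ is the union of cubes containing $p$, and it suffices to pick, per $p$, a single cube $C_p$ with $|C_p|=1\geq \ldots$. This fails in general, since $|U_p|$ can be up to $2^d$. So one must use all $2^d$ parity classes jointly: $\sum_\epsilon |\mathcal{S}_{t,\epsilon}| = \#\{p\text{ covered}\}$, and we need $\#\{p\text{ covered}\}\geq |U|$ for some $t$. Averaging over $t\in[0,1)^d$ gives exactly $\int \#\{p\in t+\Z^d: p\in U\}\,dt=|U|$. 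Hence some $t$ has at least $|U|$ covered lattice points, and then some parity class $\epsilon$ has at least $2^{-d}|U|$ of them. The corresponding disjoint subcollection (one cube per point) has volume at least $2^{-d}|U|$.

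The key steps, in order, are therefore:
\begin{enumerate}
\item the averaging identity $\int_{[0,1)^d}\#((t+\Z^d)\cap U)\,dt=|U|$;
\item choosing a good $t$, avoiding the null set of $t$ where some lattice point lies on a cube boundary;
\item disjointness within a parity class;
\item pigeonhole over the $2^d$ classes.
\end{enumerate}
The main obstacle I expect is the disjointness step. One must verify that unit cubes containing distinct same-parity points cannot meet, including boundary cases. This is handled by taking $t$ generic, so that each point is interior to the cubes containing it and each cube contains exactly one lattice point. Then two cubes with distinct same-parity points are separated along the coordinate where those points differ by $2$.
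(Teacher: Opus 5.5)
Your argument is correct and is essentially the paper's (Sokolin / Besicovitch--Rado) proof: averaging $\#((t+\Z^d)\cap U)$ over $t$ and then pigeonholing over the $2^d$ parity classes is just Blichfeldt's principle for the lattice $2\Z^d$, and your generic choice of $t$ plays the role of the paper's $\eps$ of room in $(2+\eps)\Z^d$. To make the argument airtight, note explicitly that $\{t : \#((t+\Z^d)\cap U)\geq |U|\}$ has positive measure, so a generic such $t$ exists; you can also simply delete the abandoned detours in your third paragraph.
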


By the invariance under the general linear group, the same statement is then true for any $d$-dimensional
parallelepiped.

\smallskip 
Sokolin's and Rado--Besicovitch's proof of Theorem~\ref{thm:sharp_covering_congruent_cubes}
(from~\cite{Sokolin1940} and~\cite{Rado1949I}) is a rapid application of the so-called \emph{Blichfeldt's  principle},
a basic result in the \emph{geometry of numbers}. Let us recall its statement.   

\begin{lemma}[Blichfeldt's principle~\cite{Blichfeldt14}]\label{lem:blichfeldt}
Let $\Lambda\subseteq \RR^d$ be a lattice of covolume $V$. If $E\subseteq \RR^d$ is a measurable set of
finite measure, then there exists a translate of $\Lambda$ that intersects $E$ in at least $|E|V^{-1}$ points.  
\end{lemma}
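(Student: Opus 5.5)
We prove Blichfeldt's principle by a standard averaging argument over a fundamental domain. Let $P$ be a fundamental domain of $\Lambda$, for instance the half-open parallelepiped spanned by a basis of $\Lambda$. Then $|P|=V$, and $\RR^d$ is the disjoint union of the translates $\lambda+P$, $\lambda\in\Lambda$. A translate of $\Lambda$ has the form $x+\Lambda$, and since $x+\Lambda=x'+\Lambda$ whenever $x-x'\in\Lambda$, it suffices to let $x$ range over $P$.

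For each $x\in P$ we count the intersection points
\[
N(x)=\#\bigl((x+\Lambda)\cap E\bigr)=\sum_{\lambda\in\Lambda}\mathbf{1}_E(x+\lambda).
\]
We then integrate over $P$. Since the summands are nonnegative, Tonelli's theorem lets us exchange the sum and the integral:
\[
\int_P N(x)\,dx=\sum_{\lambda\in\Lambda}\int_P\mathbf{1}_E(x+\lambda)\,dx=\sum_{\lambda\in\Lambda}|E\cap(\lambda+P)|=|E|.
\]
The last equality holds because the sets $\lambda+P$ tile $\RR^d$ disjointly. Measurability of $N$ is immediate, since it is a countable sum of measurable functions.

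Since $N$ is a nonnegative function on $P$ with $\int_P N=|E|$ and $|P|=V$, its average over $P$ is $|E|V^{-1}$. A measurable function cannot be strictly below its average almost everywhere. Indeed, if $N(x)<|E|/V$ for almost every $x\in P$, then integrating would give $\int_P N<|E|$; the strictness holds because $|P|>0$ and $|E|<\infty$. Hence there is some $x\in P$ with $N(x)\ge |E|V^{-1}$, and the translate $x+\Lambda$ meets $E$ in at least $|E|V^{-1}$ points. If $|E|=0$ the statement is trivial.

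No step is a real obstacle. The only points needing care are that $P$ is a genuine fundamental domain, which the half-open parallelepiped guarantees, and that exchanging sum and integral is justified, which Tonelli's theorem gives for nonnegative terms.
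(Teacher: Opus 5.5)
Your proof is correct and is the same averaging argument the paper sketches: the expected number of points of $E$ in the translate $x+\Lambda$, with $x$ uniform in a fundamental domain, equals $|E|V^{-1}$. You fill in the details the paper leaves to Blichfeldt's original paper, namely the exchange of sum and integral by Tonelli's theorem and the disjoint tiling by the sets $\lambda+P$.
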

Recall that a lattice is any set of the form $\Lambda=T(\Z^d)$ for some $T\in GL_d(\RR)$. The covolume of
$\Lambda$ is the volume of the torus $\RR^d/\Lambda$ (with respect to the flat metric), or equivalently the
quantity $|\det(T)|$. The proof of Lemma~\ref{lem:blichfeldt} is very simple: the average number of points of
$E$ in a random translation of $\Lambda$ is equal to $|E|V^{-1}$. We refer to Blichfeldt's original paper for
more details.  

\begin{proof}[Sokolin's/Besicovitch--Rado's proof of Theorem~\ref{thm:sharp_covering_congruent_cubes}]
  Let  $\mathcal{C}$ be a finite collection of axis-parallel cubes, which we assume without loss of generality to
  be of unit side length (for short, "unit cubes"). Let $E$ be its union. Fix $\eps>0$. By Blichfeldt's
  principle, some translate of the lattice $(2+\eps)\Z^d$ contains at least \[(2+\eps)^{-d}|E|\]
  points of $E$. 

Now notice that if two cubes of $\mathcal{C}$ intersect two distinct elements of a translation of
$(2+\eps)\Z^d$, then they are necessarily disjoint (this is why we gave ourselves an $\eps$ of
room). Thus, $\mathcal{C}$ has a disjoint sub-collection consisting of at least $(2+\eps)^{-d}|E|$ elements.
This proves that $f(Q^d)\geq (2+\eps)^{-d}$. Letting $\eps$ tend to zero, we obtain the thesis.  
\end{proof}

\label{page:voronoi}The proof above works because $Q^d$ is the Voronoi cell (centered at the origin)
of the lattice $2\Z^d$. This allows to generalize the statement to Voronoi cells of any lattice. See
p.~256 of~\cite{Rado1949I} for this point. In fact, Rado uses the same idea to prove non-sharp lower
bounds for $f(K)$ for even more general bodies, e.g., \[f(B^2)\geq \frac{\pi}{8\sqrt{3}}.\] See
Theorem~10 of~\cite{Rado1949I}. 
  Note that the use of Blichfeldt's principle can be made
constructive and deterministic. That is, a suitable translation in Lemma~\ref{lem:blichfeldt}
can be efficiently computed, provided that the set $E\subseteq \RR^d$ has a ``nice'' description; 
see for instance~\cite[Thm.~1]{Bereg_Dumitrescu_Jiang2010a}.

\smallskip 
Two other proofs of Theorem~\ref{thm:sharp_covering_congruent_cubes} are known,
due to G.~Nordlander~\cite{Nordlander1958} and V.~A.~Zalgaller~\cite{Zalgaller1960}.
Since both are quite ingenious, we won't resist the temptation to describe them.  

\begin{proof}[Sketch of Nordlander's proof]
  We argue by induction on the dimension, with the base case ($d=1$) being Theorem~\ref{thm:tibor_rado}.
  Assume that $f(Q^{d-1})=2^{-d+1}$, and let $\mathcal{C}$ be a collection of axis-parallel unit cubes in $\RR^d$,
  with union $E$. Let $E_t=E\cap \{x_d=t\}$ be the slice of $E$ of height $t$. By the inductive assumption,
  one may select a disjoint sub-collection $\mathcal{S}(t)$ of $\mathcal{C}$ with the property
that \begin{equation}\label{eq:nordlander} 
  |\cup_{Q\in \mathcal{S}(t)}Q|_{d-1} \geq 2^{-d+1}|E_t|_{d-1},
\end{equation}
where $|\cdot|_m$ is $m$-dimensional Lebesgue measure. Now one observes that the collections $\mathcal{S}(t)$
corresponding to $t\in t_0+2\Z$ are disjoint, for all but finitely many values of $t_0\in [0,2)$. Let
  $\mathcal{S}'(t_0):=\cup_{t\in t_0+2\Z}\mathcal{S}(t)$. Now pick $t_0\in [0,2)$ uniformly at random.
    From~\eqref{eq:nordlander}, it is not difficult to see (by integration) that the average fraction
    of the volume of $E$ occupied by the cubes in $\mathcal{S}'(t_0)$ is at least $2^{-d}$. Thus,
    for an appropriate value of $t_0$, $\mathcal{S}'(t_0)$ is the desired sub-collection. 
\end{proof}

\begin{proof}[Sketch of Zalgaller's proof] We sketch the two-dimensional case, as the extension to general
  dimension presents no additional difficulty.  
	
One starts by observing that the conclusion is pretty easy to achieve for collections of unit squares in "grid
position", that is, of the form $[a,a+1]\times [b,b+1]$ with $a,b\in \Z$. In fact, any such collection can be
partitioned into $4$ disjoint sub-collections, e.g., based on the parity of $a$ and $b$. Clearly, one of these
sub-collections must occupy at least $\frac{1}{4}$ of the total area. 

The key claim is that any finite collection $\mathcal{C}$ of axis-parallel unit squares can be moved
  into grid position, without decreasing its total area and without dis-joining any pair of intersecting squares.
  In other words, for each square $Q\in \mathcal{C}$ there is a translation vector $v_Q\in \RR^2$ so
that the following hold: (1) if $Q$ intersects $Q'$, then $Q+v_Q$ intersects $Q'+v_{Q'}$; (2) the total area
of $\mathcal{C}'=\{Q+v_Q\colon\, Q\in \mathcal{C}\}$ is not smaller than that of $\mathcal{C}$; (3)
$\mathcal{C}'$ is in grid position. Using this process, the conclusion follows quite easily: translating back
the disjoint sub-collection of $\mathcal{C}'$ obtained in the previous paragraph, one obtains
the desired sub-collection of $\mathcal{C}$.  

The proof of the claim proceeds "one coordinate at a time", translating first horizontally and then
vertically. Fix a square $Q_0\in \mathcal{C}$, and say that a square $Q\in \mathcal{C}$ is \emph{horizontally
  aligned} if its vertical sides are at an integer distance from those of $Q_0$. Now translate horizontally,
as a rigid body, the horizontally aligned squares: we obtain new collections
\[
\mathcal{C}(h):=\{Q+(h,0)\colon\, Q\quad \text{horizontally aligned}\}\cup \{Q\colon\,
Q\quad \text{not horizontally aligned}\}.
\]

\begin{figure}[htbp]
 \centering
 \includegraphics[scale=0.4]{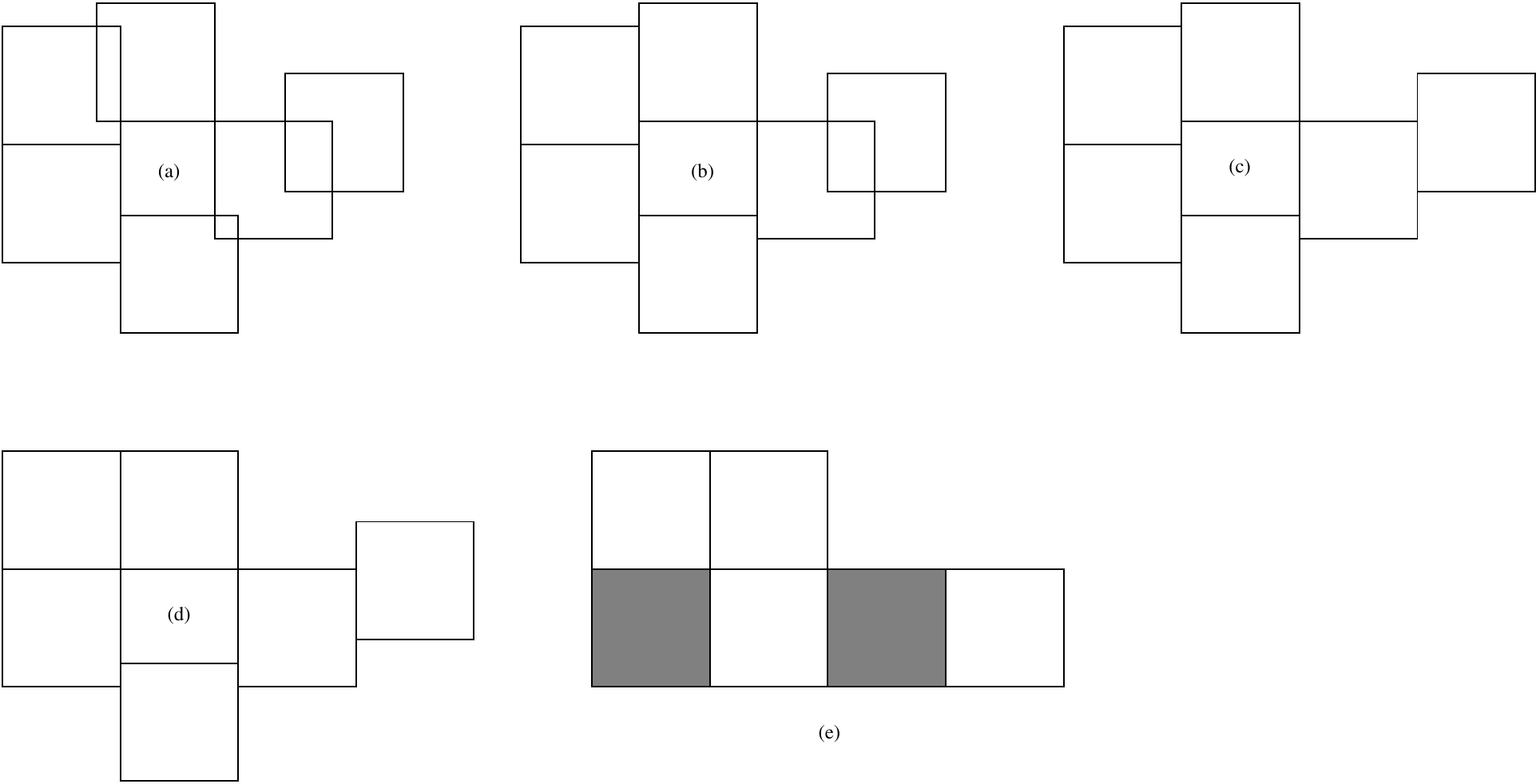}
 \caption{A collection of six squares that are brought into ``grid position''
 by two horizontal and three vertical translations.
 Note that the independence number of the collection strictly decreases here.}
 \label{fig:zalgaller}
\end{figure}

The key observation is that the area covered by $\mathcal{C}(h)$ is an affine (thus also monotone)
function of $h$, at least for $h$ in the interval $[h_-,h_+]$,
where $h_+>0$ is the smallest translation for which a new square becomes
horizontally aligned in $\mathcal{C}(h_+)$, and $h_-<0$ is defined similarly. 
Thus, depending on the monotonicity of this function, either $\mathcal{C}(h_+)$ or
$\mathcal{C}(h_-)$ has larger total area and more horizontally aligned squares than $\mathcal{C}$. It is also
clear that no pair of intersecting squares are dis-joined in the process. On the other hand, new intersecting pairs
or duplicate squares may appear. See Fig.~\ref{fig:zalgaller} for an example.

Now one can repeat the process with a larger body of aligned squares. After finitely many iterations,
we obtain a full collection of horizontally aligned squares. Repeating the procedure vertically
yields a collection in grid position, as intended.
\end{proof}

Note that both arguments yield constructive proofs. In the former, the algorithm finds a collection of parallel slices
that matches the expectation bound by checking a finite set of candidates. In the latter, the algorithm constructs
the integer subgrid induced by the squares in the final displaced position. 

In summary, we have seen three different solutions to the covering problem for \emph{congruent} axis-parallel cubes.
In the next section, we discuss what happens when the cubes are allowed to have different side lengths.

\section{A surprise and further progress}\label{sec:surprise}

Sokolin, Rado, Nordlander, and Zalgaller viewed Theorem~\ref{thm:sharp_covering_congruent_cubes} as a special
case of the conjecture that $F(Q^d)=2^{-d}$, recall~\eqref{eq:F-false}.
E.g., in~\cite{Rado1968III}, the third of R.~Rado's papers on the subject, dated 1968,
this is described as a "long-standing conjecture". Remarkably, in the two-dimensional case this
conjecture has been disproved by M.~Ajtai in 1973~\cite{Ajtai1973}. 

\begin{theorem}[Ajtai's counterexample]
$F(Q^2)<\frac{1}{4}$. 
\end{theorem}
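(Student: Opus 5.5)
We have to exhibit a finite collection $\mathcal{C}$ of axis-parallel squares, not all congruent by Theorem~\ref{thm:sharp_covering_congruent_cubes}, such that every disjoint sub-collection covers strictly less than $\frac14$ of $|\cup_{Q\in\mathcal{C}}Q|$. The plan starts from the extremal configuration of Fig.~\ref{fig:squares}\,(right): four unit squares $[0,1]^2$, $[-1,0]\times[0,1]$, $[-1,0]^2$, $[0,1]\times[-1,0]$ meeting at the origin. Their union has area $4$, and any disjoint sub-collection is a single square of area $1$, so the ratio is exactly $\frac14$. Every small perturbation that keeps the ratio $\le \frac14$ is a candidate, and we want one that pushes it strictly below.

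The idea is to exploit the fact that a large square can "waste" area. We add small squares near the corners of the configuration, positioned so that each small square meets at least two of the large ones but adds union area. If we add a tiny square $q$ of side $\delta$ centered near the common point, it is useless. Instead, we consider the following structure. Fix a central square $Q_0$ of side $1$. Place four squares of side $s$ slightly less than $1$ around it, one on each side, overlapping $Q_0$ and pairwise overlapping at the corners. Then place small squares filling the "notches" left outside the union, each overlapping two neighbouring side squares. We choose parameters so that the following three properties hold. First, any disjoint family containing $Q_0$ contains nothing else. Second, any disjoint family avoiding $Q_0$ contains at most two side squares (opposite ones) plus a bounded number of notch squares. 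Third, the union area $A$ exceeds $4$ times each candidate's area.

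Concretely, I would first try a one-parameter family and compute three quantities as functions of the parameters: the total area $A$, the area $a_1$ of the best family using a big square, and the area $a_2$ of the best family using small squares only. We then need $\max(a_1,a_2)<A/4$. Since everything is piecewise polynomial, the check reduces to finitely many inequalities. Near the degenerate four-square configuration, I expect first-order expansions in a small parameter $\varepsilon$ to suffice: $A=4+c\varepsilon$, $a_i=1+c_i\varepsilon$, and we need $c>4\max c_i$. This is plausible because union area can grow at a rate comparable to the perimeter, while any disjoint family is rigidly restricted.

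The main obstacle is the design itself. Squares placed to add union area also tend to enable new disjoint selections whose area grows at the same rate. Any naive perturbation tends to give $c\le 4\max c_i$, which is essentially why the conjecture looked plausible. I expect to need Ajtai-style asymmetry: squares of several different sizes arranged so that every disjoint family is forced to waste area. If the local perturbation of the four-square configuration fails, the fallback is a more elaborate finite configuration found by a small linear-programming or computer search over positions with fixed combinatorics, followed by verification of all maximal independent sets in the intersection graph.
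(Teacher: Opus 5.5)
\textbf{There is a genuine gap: your proposal is a search strategy, not a proof.} No configuration is ever fixed, and the decisive inequality $\max(a_1,a_2)<A/4$ is only hoped for. You concede that naive perturbations give $c\le 4\max c_i$, and your fallback is a computer search. Moreover, the one concrete structure you describe cannot work. With a central square of side $1$, four side squares of side $s$ slightly below $1$, and small notch squares, the union has area at most $1+4s^2$ plus the small notch area, i.e.\ roughly $5$. Two opposite side squares, which your second property explicitly allows together in a disjoint family, already cover $2s^2\approx 2$. That is a fraction close to $2/5>1/4$, so your second and third properties are incompatible.

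\textbf{The missing idea is the split into a gadget and an attachment.} Call a finite family $\A$ of squares in a rectangle $R$ an \emph{almost-counterexample} if every disjoint sub-family covers at most $\frac14$ of its union, and strictly less than $\frac14$ when it uses no square sitting on the lower side of $R$. Glue rescaled copies of $\A$ along outer sides of the four unit squares of Fig.~\ref{fig:squares}\,(right), one copy per unit square. A disjoint family contains at most one unit square.
\begin{enumerate}
\item If it contains none, it gets $0$ on the unit squares and at most $\frac14$ on each copy.
\item If it contains one, it gets exactly $\frac14$ on the unit squares and at most $\frac14$ on three copies. On the copy glued to the chosen square it gets strictly less, because that copy's bottom squares meet the chosen square.
\end{enumerate}
This is precisely the mechanism that forces ``wasted'' area, which your plan is looking for. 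It even realizes your first-order picture: scaled-down copies tiled along the sides give union area $4+c\varepsilon$, while every disjoint family covers at most $1+c'\varepsilon$ with $c>4c'$.

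\textbf{The substantive work is left entirely open in your proposal.} One must exhibit $\A$ and verify both properties. The paper uses $52$ squares of sides $1$ and $2$. It obtains property~(1) by covering the union with $19$ fictitious $2\times 2$ squares, each at most a quarter covered by any disjoint family. It obtains property~(2) by a short forced case analysis showing that, when no bottom square is used, some fictitious square is left completely uncovered.
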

\begin{proof} 
Let $\mathcal{A}$ be a finite collection of axis-parallel squares in the plane,
all contained in an axis-parallel rectangle $R$.
We say that $\mathcal{A}$ is an \emph{almost-counterexample} if the following two properties hold:
\begin{enumerate}
\item Any disjoint sub-collection of $\mathcal{A}$ occupies at most one fourth of the total area of $\mathcal{A}$. 
\item Any disjoint sub-collection of $\mathcal{A}$ containing no square that sits on the lower side of $R$
  occupies strictly less than one fourth of the total area of $\mathcal{A}$.  
\end{enumerate}

We now show that the existence of an almost-counterexample $\mathcal{A}$ implies the existence of a genuine
counterexample $\C$. Let $Q_0=[-1,1]^2$. The family $\C$ consists of four unit squares obtained by halving
the sides of $Q_0$ and four suitably rescaled and rotated copies of $\A$. See Fig.~\ref{fig:ajtai}.

\begin{figure}[htbp]
 \centering
 \includegraphics[scale=0.8]{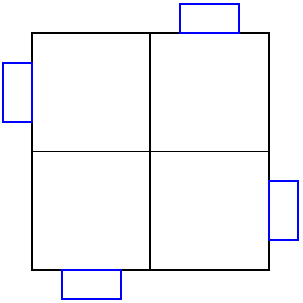}
 \caption{A counterexample obtained from almost-counterexamples $\A$ (drawn schematically as four blue rectangles).}
 \label{fig:ajtai}
\end{figure}

Let us verify that $\mathcal{C}$ is indeed a counterexample. A disjoint sub-collection $\mathcal{S}$ of
$\mathcal{C}$ may contain either one or none of the four unit squares. Let us discuss the two cases
separately. If $\mathcal{S}$ contains none of the unit squares, then property (1) in the definition of
almost-counterexample guarantees that $\mathcal{S}$ has density at most $\frac{1}{4}$ in each of the copies of
$\mathcal{A}$. Thus, it must have density $<\frac{1}{4}$ in the original collection. If $\mathcal{S}$ contains
one of the four unit squares, then $\mathcal{S}$ has density $\frac{1}{4}$ in the four unit squares, at most
$\frac{1}{4}$ in three copies of $\mathcal{A}$ (by property (1)), and strictly less than $\frac{1}{4}$ in the
copy of $\mathcal{A}$ sitting on a side of the selected unit square (by property (2)).  

\begin{figure}[htbp]
 \centering
 \includegraphics[scale=0.6]{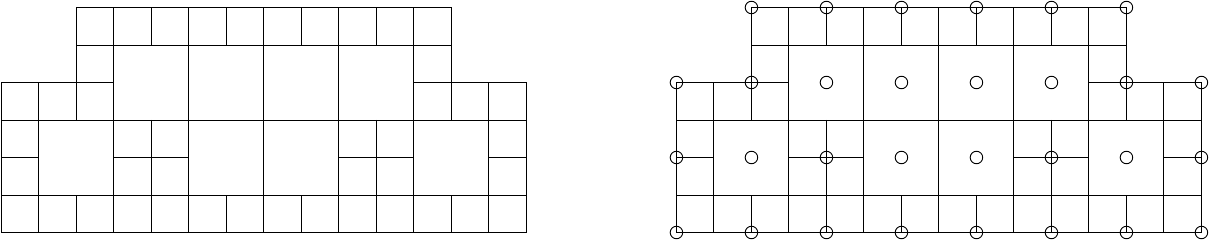}
 \caption{Left: an almost-counterexample with $52$ squares of side $1$ and $2$.
   Right: the same almost-counterexample with the vertices of $19$ "fictitious" squares of side~$2$
   superimposed (only relevant in the analysis). 
} 
 \label{fig:52}
\end{figure}

Thus, we are left with the proof that an almost-counterexample $\mathcal{A}$ exists. 
One is shown in Fig.~\ref{fig:52} (left). Let $\D$ be a disjoint subcollection of  $\mathcal{A}$.
It is clear that for each of the $19$ fictitious $2\times 2$ squares in Fig.~\ref{fig:52} (right),
at most one quarter of its area is covered by $\D$. Thus, property~(1) in the definition of
an almost-counterexample is satisfied.  

In order to show property~(2), it is enough to prove that, if $\D$ contains none of the
small bottom squares, then it covers $0 \%$ of the area of one of the fictitious squares.
We use red to color squares in $\D$ and we mark by 'X' a square if it is \emph{not} in $\D$. 
Refer to rows and columns in Fig.~\ref{fig:52-marked} for the argument; assume the rows are
numbered $1,\ldots,6$ from low to high and the columns are numbered $1,\ldots,14$ from left to right.
\begin{figure}[htbp]
	\centering
	\includegraphics[scale=0.6]{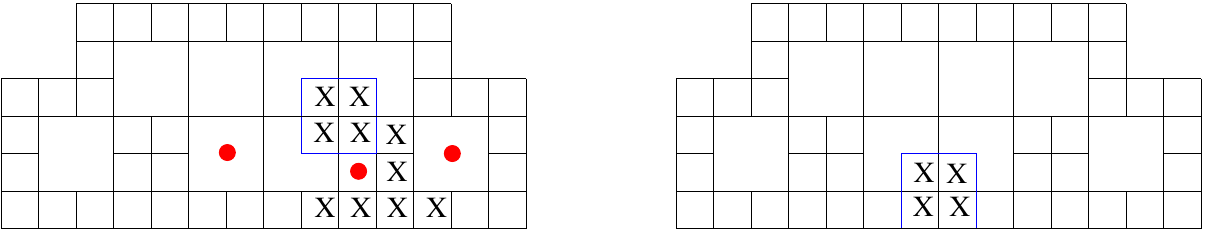}
	\caption{Case~1 (left) and Case~2 (right) of the analysis.
          All the unit squares in the bottom row are considered marked.}
	\label{fig:52-marked}
\end{figure}
We distinguish two cases. 

\smallskip
\emph{Case 1.} The $2\times 2$ square in rows $2,3$ and columns $6,7$ is red.
We will reach a contradiction in a few forced steps: (i)~the unit square in row $2$ and column $10$ is red.
(ii)~the $2\times 2$ square in rows $2,3$ and columns $12,13$ is red. (iii)~Now $0\%$ of the $2\times 2$
fictitious blue square in rows $3,4$ and columns $9,10$ is covered by $\D$.
By symmetry (and the assumptions), a similar analysis works if the $2\times 2$ square in rows $2,3$ and
columns $8,9$ is red. 
 
\smallskip
\emph{Case 2.} Neither the $2\times 2$ square in rows $2,3$ and columns $6,7$, nor the $2\times 2$ square in
rows $2,3$ and columns $8,9$, is red. We reach a contradiction immediately: $0\%$ of the $2\times 2$
fictitious blue square in rows $1,2$ and columns $7,8$ is covered by $\D$.  
\end{proof}

A further refinement of Ajtai's counterexample was obtained in~\cite{Bereg_Dumitrescu_Jiang2010b}.
Specifically, the afore-mentioned system $\A$ has been reduced and a periodic tiling of the plane
with counterexamples made up of squares of size $1$ and $2$ and holes yields that\footnote{The reader
  may observe that with only two types of squares, a fraction of $1/8$ can be easily covered by disjoint squares;
  and according to~\cite[p.~114]{Croft_Falconer_Guy1991}, L.~Mirsky remarked that $8/63$ can be also achieved.}
$f(Q^2) \leq \frac{1}{4} - \frac{1}{384}$.
Interestingly enough, although a positive answer is expected in any higher dimension,
the question whether $F(Q^d)<2^{-d}$ for all $d \geq 2$ has not been addressed in the literature.  

\smallskip
After Ajtai's result, no reasonable conjecture seems to have emerged about the value of $F(Q^2)$, or of
$F(Q^d)$ for general $d$. The question of determining $F(Q^d)$ kept circulating in the discrete geometry community
(e.g., see Problem~D6 in~\cite{Croft_Falconer_Guy1991}). Some progress was already made by R.~Rado,
who proved that $F(Q^d)>(3^d-7^{-d})^{-1}$ (\cite[Theorem 9]{Rado1949I}, which improves~\eqref{eq:minuscule_rado},
valid for any symmetric convex body). Later S.~Bereg, the second named author, and M.~Jiang
improved it to $F(Q^d)\geq (3^d-\frac{1}{2}+o(1))^{-1}$ (\cite[Theorem 1]{Bereg_Dumitrescu_Jiang2010b}).
While both results are based on non-trivial refinements of Vitali's greedy algorithm, they fall short
of achieving an exponential improvement of the gap between Vitali's $3^{-d}$ bound and the $2^{-d}$ upper bound.  

Recently, this gap was closed, modulo a $d\log d$ factor, by the first author~\cite{Dall'Ara25}.
This is a consequence of the following general result.

\begin{theorem}[$F(K)$ vs.~$f(K)$] \label{thm:F-vs-f}
  For any symmetric convex body of dimension $d$, we have
\begin{equation}\label{eq:F_vs_f}
	F(K)\geq \left( e^{-1}+o(1)\right)  \frac{f(K)}{d\log d}.
\end{equation}
\end{theorem}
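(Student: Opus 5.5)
We reduce the multi-scale problem to the single-scale one by grouping the copies into scale classes and then selecting disjoint sub-collections scale by scale, from the largest scale down. Fix a ratio $\lambda>1$, to be optimized at the end, and split $\C$ into classes $\C_k$ of homothets $x+rK$ with $r\in(\lambda^{-k-1},\lambda^{-k}]$. Within one class all radii agree up to a factor $\lambda$. Shrinking each member $x+rK$ to $x+\lambda^{-k-1}K$ costs at most a factor $\lambda^{d}$ in volume and only makes the sets more disjoint. So $f(K)$, applied to the shrunken copies, gives a disjoint sub-collection of $\C_k$ whose union has volume at least $\lambda^{-d}f(K)\,|\cup\C_k|$. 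This loses a factor roughly $\lambda^d$, so we must take $\lambda=1+\Theta(1/d)$, making $\lambda^{-d}$ a constant.

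The interaction between scales is handled by a random-shift / thinning argument in the spirit of Nordlander's proof. We process classes in order of decreasing size and maintain a set $U$, the union of the enlarged versions of the already chosen sets. At stage $k$ we discard every member of $\C_k$ meeting a previously selected body. To control the loss, we do not take every scale. Instead we choose a random residue and keep only classes $k\equiv j \pmod m$, where $m\approx \log_\lambda(\text{const}\cdot d)\approx d\log d$. Two kept scales then differ by a factor at least $\lambda^m\gtrsim d$, so a small body meeting a selected large body $x+rK$ lies in $x+r(1+O(1/d))K$. Those small bodies can therefore be charged to a thin shell of relative volume $(1+O(1/d))^d-1=O(1)$.

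More precisely, we apply the $f(K)$ selection at stage $k$ only to the part of $\cup\C_k$ lying outside the enlargements $x+r(1+2\lambda^{-m})K$ of earlier selections. A potential-function/induction argument then shows that the selected volume at the kept scales is at least roughly $f(K)\lambda^{-d}e^{-O(\cdot)}$ times the volume of the union of the kept classes minus the previously covered region. Averaging over the residue $j$ yields a factor $1/m$, which, combined with $\lambda^{-d}\approx e^{-1}$ for the optimal $\lambda=1+1/d$ and $m\sim d\log d$, should give $(e^{-1}+o(1))f(K)/(d\log d)$.

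The main obstacle is the bookkeeping in the middle step. The region already covered by earlier (larger) selections and their enlargements must not be double counted, yet the union of a class restricted to the uncovered region must still be comparable to what $f(K)$ controls. The issue is that $f(K)$ is stated for whole collections, not for pieces of them. Our first attempt is to apply $f(K)$ to the sub-collection of $\C_k$ consisting of copies disjoint from the enlarged earlier selections. Copies that meet those enlargements are contained in a slightly larger dilate (since their scale is much smaller), so their union is absorbed into the $O(1/d)$-shell. The optimization of $\lambda$ and $m$ to extract exactly $e^{-1}$ and $d\log d$ is then a routine calculus exercise.
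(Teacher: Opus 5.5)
Your overall strategy (lacunary residue classes, then a largest-first pass in which each class is thinned against the \emph{already selected} bodies) is viable. It also differs usefully from the paper's argument. However, two steps do not work as written.

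First, the reduction to a single scale runs in the wrong direction. Suppose you shrink each level-$k$ member to $x+\lambda^{-k-1}K$ and let $f(K)$ choose a disjoint family of shrunken copies. The corresponding original copies need not be disjoint: shrinking preserves disjointness, but you need the converse implication. Your volume claim is also not free: that the union of the shrunken copies has at least $\lambda^{-d}$ times the volume of the original union is a Kneser--Poulsen type statement. The fix is to enlarge instead, replacing $x+rK$ with $r\in(\lambda^{-k-1},\lambda^{-k}]$ by $x+\lambda^{-k}K$. Then disjoint enlargements have disjoint originals, and the enlarged union contains the original union. Moreover, each selected original has at least $\lambda^{-d}$ times the volume of its enlargement. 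This is exactly the paper's first step, rounding radii up to the next power of $1+\frac1d$.

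Second, the ``potential-function/induction argument'' is never supplied. The bound you state carries an unspecified factor $e^{-O(\cdot)}$, which would not give the constant $e^{-1}$. Your ``first attempt'' in the last paragraph is in fact correct, and the bookkeeping takes one line.

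Fix the residue $j$ by pigeonhole, so that the kept family $\mathcal{C}^{(j)}$ has union at least $|\bigcup\mathcal{C}|/m$. Process the kept levels from large to small. Let $\mathcal{C}_k^*$ be the level-$k$ members disjoint from all previously selected bodies; no enlargement is needed here. Let $\mathcal{S}_k\subseteq\mathcal{C}_k^*$ be given by $f(K)$ applied to the enlarged copies, so that $|\bigcup\mathcal{C}_k^*|\le\lambda^d f(K)^{-1}|\bigcup\mathcal{S}_k|$. A kept member $x+rK$ not in $\mathcal{C}_k^*$ meets a selected $B'=x'+r'K$ from a kept level $k'\le k-m$. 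Hence $r\le\lambda^{1-m}r'$, and the member lies in $\hat B'=x'+r'(1+2\lambda^{1-m})K$. Therefore
\[
|\textstyle\bigcup\mathcal{C}^{(j)}|\le\sum_k|\bigcup\mathcal{C}_k^*|+\sum_{B'\in\mathcal{S}}|\hat B'|\le\bigl(\lambda^d f(K)^{-1}+(1+2\lambda^{1-m})^d\bigr)|\textstyle\bigcup\mathcal{S}|.
\]
Because $f(K)\le 2^{-d}$, the shell term is negligible. With your parameters it is $O(1)$, against $\lambda^d f(K)^{-1}\ge(2\lambda)^d$. You then get exactly $(e^{-1}+o(1))f(K)/(d\log d)$.

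Compare this with the paper. There, small copies meeting \emph{any} large copy are discarded, and \emph{all} large copies are inflated. In the sketch this costs $(1+\frac2d)^d\approx e^2$, which is removed only by a finer choice of parameters. It also forces a scale ratio $\gtrsim d$ between kept classes, which is where $J\approx d\log d$ comes from.

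Your adaptive thinning instead charges discarded copies only to selected ones, whose volume is roughly an $f(K)$-fraction of the union. So the inflation is essentially free. The display above shows that $\lambda^{m-1}\ge 2+o(1)$ already suffices. With $\lambda=1+\frac1d$ this allows $m\approx d\log 2$, which yields the stronger bound $F(K)\ge\bigl(\frac{1}{e\log 2}+o(1)\bigr)f(K)/d$.
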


Since $F(K)\leq f(K)$, the theorem says that the behavior of the two constants $F(K)$ and $f(K)$ is the same,
modulo a sub-exponential factor. Combining~\eqref{eq:F_vs_f} with Theorem~\ref{thm:sharp_covering_congruent_cubes},
we obtain:

\begin{corollary}[Almost sharp Vitali covering lemma for axis-parallel cubes]
  \[F(Q^d) \geq \left(e^{-1}+o(1) \right) \frac{2^{-d}}{d\log d}.
  \footnote{The same is true for the Voronoi cell of any lattice. See p.~\pageref{page:voronoi}.}\]
\end{corollary}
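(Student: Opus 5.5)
The plan is a direct substitution of the exact value of $f(Q^d)$ into the general comparison between $F$ and $f$. First I invoke Theorem~\ref{thm:F-vs-f} with $K=Q^d=[-1,1]^d$. This is admissible, since $Q^d$ is a symmetric convex body: it is compact, convex, has non-empty interior, and is centrally symmetric about the origin. The theorem gives
\[
F(Q^d)\geq \left(e^{-1}+o(1)\right)\frac{f(Q^d)}{d\log d},
\]
where the $o(1)$ term depends only on $d$ and tends to $0$ as $d\rightarrow\infty$. In particular, it is uniform over all symmetric convex bodies of dimension $d$, so specializing to the cube introduces no extra dependence.

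Second, I substitute the exact value $f(Q^d)=2^{-d}$ from Theorem~\ref{thm:sharp_covering_congruent_cubes}, which was proved via Blichfeldt's principle (Lemma~\ref{lem:blichfeldt}). Only the lower bound $f(Q^d)\geq 2^{-d}$ is needed here. Since the factor $e^{-1}+o(1)$ is positive for $d$ large, multiplying the inequality by it preserves the direction, and the displayed bound follows at once. Combined with $F(Q^d)\leq f(Q^d)\leq 2^{-d}$ from \eqref{eq:fandF} and \eqref{eq:easy_upper_bound_2}, this pins down $F(Q^d)$ up to the sub-exponential factor $d\log d$.

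For the footnoted extension, I would repeat the same two steps with $Q^d$ replaced by the Voronoi cell $V$ of a lattice $\Lambda$. The Blichfeldt argument applied to a dilate of $\Lambda$ gives $f(V)\geq |V|/|2V|=2^{-d}$, and \eqref{eq:easy_upper_bound_2} gives the matching upper bound. Theorem~\ref{thm:F-vs-f} then yields the same estimate for $F(V)$.

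There is no real obstacle in this deduction: all the difficulty lies in Theorem~\ref{thm:F-vs-f} itself, which we are allowed to assume here.
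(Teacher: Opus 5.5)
Your proposal is correct and is exactly the paper's argument: the corollary is obtained by substituting $f(Q^d)=2^{-d}$ from Theorem~\ref{thm:sharp_covering_congruent_cubes} into the bound of Theorem~\ref{thm:F-vs-f}. For the footnote, your Blichfeldt argument with the dilate $(2+\eps)\Lambda$, giving $f(V)\geq 2^{-d}$ for a lattice Voronoi cell $V$, is the extension the paper points to as well.
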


Let us give a sketch of the proof of~\eqref{eq:F_vs_f}, referring to~\cite{Dall'Ara25} for more details. 

\begin{proof}[Sketch of proof of \eqref{eq:F_vs_f}]
  Let $\mathcal{C}=\{x_1+r_1 K, \ldots, x_N+r_NK\}$ be an arbitrary finite collection of
  homothetic copies of $K$, and let
\[
\delta(\mathcal{C})=\max \frac{|\cup_{K'\in \mathcal{S}}K'|}{|\cup_{K'\in \mathcal{C}}K'|}, 
\]
where the maximum is over all disjoint sub-collections $\mathcal{S}$ of $\mathcal{C}$. 

As a first step, we want to reduce ourselves to the situation where each $r_j$ is an integer power of
$1+\frac{1}{d}$. Let $r_j'$ be the minimal integral power of $1+\frac{1}{d}$ larger than $r_j$, and consider
the new collection $\mathcal{C}'=\{x_1+r_1' K, \ldots, x_N+r_N'K\}$. It is not difficult to check that, in
passing from $\mathcal{C}$ to $\mathcal{C}'$, the volume of a maximal disjoint sub-collection increases by a
factor not exceeding $\left(1+\frac{1}{d}\right)^d=e+o(1)$; this justifies our choice of "basic scale"
$1+\frac{1}{d}$. Thus,  
\begin{equation}\label{eq:F_vs_f_1}
\delta(\mathcal{C})\geq \left( e^{-1}+o(1)\right) \delta(\mathcal{C}').
\end{equation}

For the second step, let $J$ be an integer, to be determined shortly. 
We partition $\mathcal{C}'$ into $J$ sub-collections $\mathcal{C}_i'$ ($i=0,\ldots, J-1$)
consisting of elements of the form $x+\left(1+\frac{1}{d}\right)^pK$ with $p$ congruent to $i$ mod $J$.
One of these sub-collections, which we denote $\mathcal{C}''$, must occupy a fraction at least
$\frac{1}{J}$ of the total volume of $\mathcal{C}'$. In particular,
\begin{equation}\label{eq:F_vs_f_2} 
\delta(\mathcal{C}')\geq J^{-1}\delta(\mathcal{C}'').
\end{equation} 

If we choose as $J$ the smallest integer $\geq d\log d$, any two distinct radii
$r_1=\left(1+\frac{1}{d}\right)^{p_1}, r_2=\left(1+\frac{1}{d}\right)^{p_2}$ appearing in $\mathcal{C}''$
satisfy the lacunarity condition ($r_2>r_1$)
 \[
\frac{r_2}{r_1}\geq \left(1+\frac{1}{d}\right)^{d\log d}= \left( 1+o(1) \right) d.
\]

The third and final step consists in taking advantage of this lacunarity. We explain this assuming that only
the two radii $1$ and $\frac{1}{d}$ appear in $\mathcal{C}''$. The general case is similar (see~\cite{Dall'Ara25}).
Thus, \[\mathcal{C}''=\{x_1+K, \ldots, x_N+K\}\cup\{y_1+d^{-1}K, \ldots, y_M+d^{-1}K\}=:\mathcal{C}_1\cup\mathcal{C}_2.\]  
We inflate each set in $\mathcal{C}_1$ by a factor $1+\frac{2}{d}$, \[
\mathcal{C}_1':=\left\{x_1+\left(1+\frac{2}{d}\right)K, \ldots, x_N+\left(1+\frac{2}{d}\right)K\right\}, 
\]
and remove from $\mathcal{C}_2$ every set intersecting some element of $\mathcal{C}_1$. Call the resulting
collection $\mathcal{C}_2'\subseteq \mathcal{C}_2$. Notice that any set removed from $\mathcal{C}_2$ is
contained in $\mathcal{C}_1'$ (e.g., by the triangle inequality for the norm corresponding to $K$).  

In passing from $\mathcal{C}_1$ to $\mathcal{C}_1'$, the volume of each set has been increased by a factor
$(1+2d^{-1})^d=e^2+o(1)$. Thus, by definition of $f(K)$, there exist a disjoint sub-collection $\mathcal{S}_1$
of $\mathcal{C}_1$ occupying a fraction at least $(e^{-2}+o(1))f(K)$ of the volume of
$\mathcal{C}_1'$. Again by definition of $f(K)$, there is also a disjoint sub-collection $\mathcal{S}_2$ of
$\mathcal{C}_2'$ occupying a fraction at least $f(K)$ of its volume. By construction, the collection
$\mathcal{S}_1\cup \mathcal{S}_2$ is disjoint and occupies a fraction at least $(e^{-2}+o(1))f(K)$ of the
total volume of $\mathcal{C}''$. This proves that
\begin{equation}\label{eq:F_vs_f_3} 
\delta(\mathcal{C}'')\geq \left( e^{-2}+o(1) \right)f(K). 
\end{equation} 
Combining~\eqref{eq:F_vs_f_1}, \eqref{eq:F_vs_f_2} and~\eqref{eq:F_vs_f_3}, we obtain
$F(K)\geq (e^{-3}+o(1))\frac{f(K)}{d\log d}$.
A more careful choice of the basic scale $1+\frac{1}{d}$ and the parameter $J$ gives
the stated bound~\eqref{eq:F_vs_f}.  
\end{proof}

\section{The case of balls: upper bounds} \label{sec:balls}

In this section we study $F(B^d)$ and $f(B^d)$ in the high-dimensional regime, highlighting a
connection with sphere packing problems. See for instance~\cite{Conway-Sloane} for some background
on this classical problem, and~\cite{KLV04, CZ14, CJMS23, Kl25} for recent developments in the
high-dimensional regime of interest here. 

Recall that we have
\[3^{-d}\leq F(B^d) \leq f(B^d) \leq 2^{-d}, \qquad f(B^d)\leq(e+o(1)) \cdot d\log d \cdot F(B^d), \]
and the small improvements over the $3^{-d}$
lower bound given by~\eqref{eq:minuscule_rado} and~\eqref{eq:sweep}.\footnote{It is possible
  to further improve~\eqref{eq:minuscule_rado} and~\eqref{eq:sweep} by 
  exploiting the explicit geometry of the Euclidean ball, but not substantially.}

In view of the results for cubes discussed above, we raise the following question:
does a bound of the form
\begin{equation} \label{eq:b<3}
  F(B^d) \geq b^{-d}
\end{equation}
with $b<3$ hold? Can one take $b=(2+o_{d \to \infty}(1))$? More precisely, what is the value of the constant
\[ 
\liminf_{d\rightarrow \infty} \frac{\log F(B^d)}{d} = \liminf_{d\rightarrow \infty} \frac{\log f(B^d)}{d}?
\]

Notice that the problem of estimating from below $f(B^d)$ is a sort of "relative" sphere packing
problem, where one is constrained to choose the balls from a prescribed and completely arbitrary
collection. A basic question then is whether one can achieve the almost-best possible density
$2^{-d-o(d)}$ inside the union of the given balls. Next we show that this is not the case: the celebrated
Kabatiansky--Levenshtein upper bound for spherical codes (see~\cite{KL89}) yields an exponential
strengthening of the upper bound $f(B^d)\leq 2^{-d}$, highlighting in particular that the behavior of the
constants $f(B^d)$ and $F(B^d)$ is quite different from their analogues for cubes (perhaps
  not surprisingly, in view of the connection with sphere packing).  

\begin{theorem}[Kabatiansky--Levenshtein upper bound for $f(B^d)$] \label{thm:upper}
\begin{equation} \label{eq:b>2.44}
  f(B^d) = O(2.447^{-d}).
\end{equation}
\end{theorem}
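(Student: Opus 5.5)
The plan is to generalize the example behind~\eqref{eq:easy_upper_bound_2}: take many unit balls whose centers fill a larger ball, and bound how many of them can be pairwise disjoint using the Kabatiansky--Levenshtein bound.

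Fix a parameter $\rho>1$, to be optimized at the end. Let $\mathcal{C}_\rho$ be the collection of all unit balls $x+B^d$ with $x\in\rho B^d$. Its union is $(1+\rho)B^d$, of volume $(1+\rho)^d|B^d|$. As in the footnote to~\eqref{eq:easy_upper_bound}, a finite subcollection covers all but an arbitrarily small fraction of this volume, so a finite collection suffices.

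A disjoint subcollection corresponds to a set $X\subseteq \rho B^d$ with pairwise distances $>2$. Its volume fraction is therefore
\[ \frac{|X|}{(1+\rho)^d}. \]
So everything reduces to the classical quantity $N(d,\rho)$: the maximal number of points in a ball of radius $\rho$ with mutual distances at least $2$, that is, unit-ball packings with centers confined to $\rho B^d$. This gives
\[ f(B^d)\leq \inf_{\rho} \frac{N(d,\rho)}{(1+\rho)^d}. \]

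Next I bound $N(d,\rho)$ by a spherical code. I would use the standard reduction that Kabatiansky and Levenshtein themselves use to pass from spherical codes to packings. Lift $X\subseteq\rho B^d$ to the sphere of radius $\rho$ in $\RR^{d+1}$ by $x\mapsto (x,\sqrt{\rho^2-|x|^2})$. Distances do not decrease under this lift, so the images have mutual angular separation at least $\theta$, where $\sin(\theta/2)=1/\rho$. Hence $N(d,\rho)\leq A(d+1,\theta)$, the maximal size of a spherical code with minimal angle $\theta$. If this crude lift turns out too lossy, I would substitute Levenshtein's sharper version, which first reduces to a thin spherical shell.

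Now invoke the Kabatiansky--Levenshtein bound: for $0<\theta<\pi/2$,
\[ A(n,\theta)\leq 2^{n(c(\theta)+o(1))}, \]
with
\[ c(\theta)=\frac{1+\sin\theta}{2\sin\theta}\log_2\frac{1+\sin\theta}{2\sin\theta}-\frac{1-\sin\theta}{2\sin\theta}\log_2\frac{1-\sin\theta}{2\sin\theta}. \]
This yields
\[ f(B^d)\leq 2^{-d\,(\log_2(1+\rho)-c(\theta(\rho))+o(1))}. \]

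The final step is a one-variable optimization of
\[ g(\rho)=\log_2(1+\rho)-c(\theta(\rho)) \]
over $\rho>\sqrt2$; the lower limit is needed so that $\theta<\pi/2$ and KL applies. Nothing is gained for $\rho\le 1$, and the naive choice $\rho=1$ only gives $2^{-d}$. The goal is a value $\rho^*$ with $2^{g(\rho^*)}>2.447$, which gives the stated exponential rate, once the subexponential losses are absorbed.

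I expect the main obstacle to be twofold. First, choosing the right reduction from packings in a ball to spherical codes, so that the loss in angle is small enough for the optimum to reach $2.447$ rather than something weaker. Second, cleanly absorbing the $o(d)$ terms in the exponent to get the $O(\cdot)$ form; if needed, this can be done by aiming for a slightly larger base and then rounding down.
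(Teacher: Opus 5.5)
Your proposal is correct and follows essentially the paper's route: the same collection of unit balls with centers in $\rho B^d$, the Kabatiansky--Levenshtein bound, and the same one-variable optimization, whose minimizer $\rho^*\approx 1.50586$ (so $\theta\approx 1.4525<\pi/2$) gives base $2^{g(\rho^*)}\approx 2.44789>2.447$. The only difference is the reduction to spherical codes: the paper projects the nonzero centers radially inside $\RR^d$ (a lemma of Cohn--Zhao, for $\rho\in[1,2]$) to get $N(d,\rho)\le A(d,\theta)+1$, whereas your hemisphere lift gives $N(d,\rho)\le A(d+1,\theta)$, which has the same exponential rate, so it is not too lossy and no sharper shell argument is needed.
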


As a warm-up we first show a weaker bound that is easier to achieve, based on the following
result obtained independently by Rankin~\cite{Ran55} and Davenport--Hajos~\cite{DH51}.
The formulation below is from~\cite[Theorem 1]{Ku07}, where it is presented as a special case
of a "dispersion problem".

\begin{theorem}[Rankin, Davenport--Hajos]\label{thm:rankin}
If $d+2$ points lie in the unit ball, then at least two of them have distance smaller than
or equal to $\sqrt{2}$. \footnote{It is worth  noting that the above result is consistent
with Jung's Theorem~\cite{Ju1901}, namely the fact that every set of unit diameter in $\RR^d$
can be covered by a ball of diameter $\sqrt{\frac{2d}{(d+1)}}$ in $\RR^d$; see~\cite[p.~46]{HD64}.
One can see that a ball of diameter $2\sqrt{\frac{2d}{(d+1)}}$ contains $d+1$ points at pairwise
distance $2$, namely the vertices of a regular simplex of side length~$2$.}
\end{theorem}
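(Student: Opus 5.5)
We need to prove: among $d+2$ points in the closed unit ball in $\RR^d$, some two are at distance at most $\sqrt{2}$. Equivalently, we cannot have $d+2$ points $x_1,\dots,x_{d+2}$ with $|x_i|\le 1$ and $|x_i-x_j|>\sqrt{2}$ for all $i\neq j$.

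The plan is a short Gram-matrix argument. Suppose, for contradiction, that such points exist.

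First, translate the distance condition into inner products. For $i\neq j$,
\[
|x_i-x_j|^2=|x_i|^2+|x_j|^2-2\langle x_i,x_j\rangle \le 2-2\langle x_i,x_j\rangle .
\]
So $|x_i-x_j|^2>2$ forces $\langle x_i,x_j\rangle<0$ for all $i\neq j$. The problem thus reduces to the classical fact that $\RR^d$ contains at most $d+1$ vectors with pairwise strictly negative inner products. Note that this also rules out $x_i=0$, since the zero vector has inner product $0$ with everything.

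Second, prove that fact by induction on $d$. The base case is $d=1$: three nonzero reals cannot have pairwise negative products, since two of them share a sign.

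For the inductive step, take $v_1,\dots,v_{d+2}$ in $\RR^d$ with pairwise negative inner products. Project $v_1,\dots,v_{d+1}$ onto the hyperplane $v_{d+2}^\perp$:
\[
w_i=v_i-\frac{\langle v_i,v_{d+2}\rangle}{|v_{d+2}|^2}\,v_{d+2}.
\]
Then for $i\neq j$,
\[
\langle w_i,w_j\rangle=\langle v_i,v_j\rangle-\frac{\langle v_i,v_{d+2}\rangle\langle v_j,v_{d+2}\rangle}{|v_{d+2}|^2}<0,
\]
because the first term is negative and the subtracted fraction is positive, being a product of two negative numbers over a positive one. This gives $d+1$ vectors in a $(d-1)$-dimensional space with pairwise negative inner products, which contradicts the induction hypothesis.

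Alternatively, one could take a linear dependence $\sum \lambda_i v_i=0$, split the indices into those with positive and negative coefficients, and pair the two sides to obtain a contradiction. I would use the projection argument because it is cleaner.

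The only point needing care is the strict versus non-strict inequality, and it works out in our favour. The hypothesis "distance $>\sqrt2$" yields strictly negative inner products, and the projection step preserves strictness. No compactness or limiting argument is needed. I expect no real obstacle: the main step is recognizing the reduction to obtuse vector systems, after which the argument is routine.
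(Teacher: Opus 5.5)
Your proof is correct. The estimate $|x_i-x_j|^2\le 2-2\langle x_i,x_j\rangle$ turns the hypothesis into pairwise strictly negative inner products. The projection onto $v_{d+2}^{\perp}\cong\RR^{d-1}$ preserves strict negativity, because the subtracted term $\langle v_i,v_{d+2}\rangle\langle v_j,v_{d+2}\rangle/|v_{d+2}|^2$ is positive. The induction therefore closes from the base case $d=1$; the $w_i$ are automatically nonzero.

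The paper gives no proof of this statement. It quotes the result from Rankin and Davenport--Haj\'os, in the formulation of Kuperberg, where it appears as a special case of a dispersion problem. So your argument is not an alternative to one in the text; it supplies one.

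Your proof makes explicit that the ball version reduces, via $|x_i|^2+|x_j|^2\le 2$, to the classical fact that $\RR^d$ contains at most $d+1$ pairwise obtuse vectors. Your care with the strict inequality is also exactly what the subsequent warm-up bound needs, since disjoint closed unit balls have centers at distance strictly greater than $2$.

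Citing the result buys the authors brevity and context, namely the link to spherical codes that they then push further with the Kabatiansky--Levenshtein bound. Your version buys self-containedness at the cost of a few lines.
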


Using the above fact, one easily gets the following.

\begin{theorem}[Warm-up upper bound]\label{thm:warm-up}
	\[ f(B^d)\leq (d+1)(1+\sqrt{2})^{-d}. \] 
\end{theorem}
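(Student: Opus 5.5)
We take as the collection $\mathcal{C}$ all unit balls $x+B^d$ whose centers lie in the ball $rB^d$, with radius $r=\sqrt{2}$. This is an infinite family, but as in the proof of \eqref{eq:easy_upper_bound} we may pass to finite sub-collections covering any prescribed fraction of the union. The union of $\mathcal{C}$ is $(1+r)B^d$, so its volume is $(1+\sqrt{2})^d|B^d|$. The plan is to show that every disjoint sub-collection has at most $d+1$ members. Its volume is then at most $(d+1)|B^d|$, so the fraction it occupies is at most $(d+1)(1+\sqrt{2})^{-d}$, which is the bound claimed in Theorem~\ref{thm:warm-up}.

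The key step is to control the size of a disjoint sub-collection. Two unit balls $x+B^d$ and $y+B^d$ are disjoint exactly when $|x-y|>2$. Suppose a disjoint sub-collection had $d+2$ members. Their centers would be $d+2$ points in $\sqrt{2}B^d$ at pairwise distance greater than $2$. Rescaling by $1/\sqrt{2}$ gives $d+2$ points in the unit ball at pairwise distance greater than $\sqrt{2}$. This contradicts Theorem~\ref{thm:rankin}, so a disjoint sub-collection has at most $d+1$ elements.

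Since $f(B^d)$ concerns congruent balls and every member of $\mathcal{C}$ is a translate of $B^d$, this example is admissible. We would also remark that the radius $\sqrt{2}$ is the largest for which Rankin's bound applies, and it is what produces the base $1+\sqrt{2}$. The only delicate point is the finiteness reduction. We take a finite $\eta$-dense subset of centers, whose union has volume at least $(1-\eta)^d$ times that of $(1+\sqrt2)B^d$, and then let $\eta\to0$. The cardinality argument above applies verbatim to any subfamily, so this step is routine rather than a real obstacle.
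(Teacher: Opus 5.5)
Your proposal is correct and matches the paper's proof: you use the family of all unit balls centred in $\sqrt{2}B^d$ and apply the rescaled Rankin--Davenport--Haj\'os bound to cap any disjoint sub-collection at $d+1$ members. Your explicit finiteness reduction via an $\eta$-dense set of centres is a valid addition (the union contains $(1+\sqrt{2}-\eta)B^d$); the paper handles this step only through its earlier footnote.
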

Note that $1+\sqrt{2} < 2.447$, so this result is asymptotically weaker than Theorem~\ref{thm:upper}.

\begin{proof} 
Let $\mathcal{C}$ be the collection of all unit balls  in $\RR^d$ with centers in a ball of
radius $\sqrt{2}$, that is, all unit balls contained in a ball of radius $1+\sqrt{2}$. We
claim that any disjoint sub-collection $\mathcal{S}\subseteq \C$ has cardinality at most
$d+1$. From this, the thesis follows immediately. The claim is equivalent to (a rescaled
version of) Theorem~\ref{thm:rankin}, because the centers of the balls in $\mathcal{S}$ are
points in a ball of radius $\sqrt{2}$, whose pairwise distances are strictly larger than~$2$.  
\end{proof}

We now prove Theorem~\ref{thm:upper}, making use of the best known linear programming bound for
spherical codes (see~\cite{KL89, CZ14}), which allows us to look at a scale slightly larger
than $\sqrt{2}$ (cf.~Theorem~\ref{thm:warm-up}). We recall that a \emph{spherical code} in dimension $d$
with minimum angle $\theta$ is a set of points on the unit sphere of $\RR^d$ such that no two points
subtend an angle less than $\theta$ at the origin. We denote by $A(d,\theta)$ the maximal cardinality
of such a spherical code.  

\begin{proof}[Proof of Theorem \ref{thm:upper}]
  Let $r\in [1,2]$ and consider the collection $\mathcal{C}_r^d$ of all unit balls in $\RR^d$ with
  center in $B^d(r)=\{x\in \RR^d\colon\, |x|\leq r\}$. 
  If $N_r(d)$ is the maximal cardinality
  of a disjoint sub-collection of $\mathcal{C}^d_r$, then \begin{equation}\label{eq:ball_r}
f(B^d)\leq \frac{N_r(d)}{(r+1)^d}. 
\end{equation}
  There are (at least) $N_r(d)-1$ points $x_1,\ldots, x_{N_r(d)-1}\in B_d(0,r)\setminus\{0\}$, whose
  mutual distance is at least $2$. 
  Projecting them radially 
  onto the unit sphere, that is, considering  $y_j=\frac{x_j}{|x_j|}$ ($j=1,\ldots, N_r(d)-1$), we
  get a spherical code of angle $\theta$, 
  where $\theta\in [\pi/3, \pi]$ is uniquely defined by $\frac{1}{r}=\sin(\theta/2)$.
  This follows from Lemma~2.2 of~\cite{CZ14}. Thus,
\[ N_r(d)\leq A(d,\theta)+1\leq 2A(d,\theta). \] 
Combining this upper bound with~\eqref{eq:ball_r}, we get
\begin{eqnarray*}
  \limsup_{d\rightarrow \infty}\frac{1}{d}\log f(B^d)&\leq&
  \limsup_{d\rightarrow \infty}\frac{1}{d}\log A(d,\theta) -\log(r+1)\\
&\leq& \limsup_{d\rightarrow \infty}\frac{1}{d}\log A(d,\theta) -\log(1+1/\sin(\theta/2)).
\end{eqnarray*}
It has been shown by Kabatiansky--Levenshtein~\cite{KL89} (see also formula (2.6) of~\cite{CZ14}) that
\[ \frac{1}{d}\log A(d,\theta) \leq (1+o_{d\rightarrow \infty}(1)) \left(\frac{1+\sin\theta}{2\sin\theta}
\log\frac{1+\sin\theta}{2\sin\theta}-\frac{1-\sin\theta}{2\sin\theta} \log\frac{1-\sin\theta}{2\sin\theta}\right). \]
Thus, $\limsup_{d\rightarrow \infty}\frac{1}{d}\log f(B^d)$ is at most
\[
\left(\frac{1+\sin\theta}{2\sin\theta} \log\frac{1+\sin\theta}{2\sin\theta}-\frac{1-\sin\theta}
     {2\sin\theta} \log\frac{1-\sin\theta}{2\sin\theta}\right)-\log(1+1/\sin(\theta/2)),
\]
where $\theta$ is any angle in $[\pi/3,\pi]$ (notice that any such angle is achieved by an
appropriate choice
of $r\in [1,2]$). The numerical value of this minimum\footnote{The authors used Wolfram Alpha for this
  computation.} 
is $-0.895227...$.
Since $\exp(0.895227...)=2.44789...$ we deduce that
\[ f(B^d)\leq (2.44789)^{-d+o(d)}.  \] 
The minimum is achieved at $\theta=1.45251...$, that is, $r=1.50586...>\sqrt{2}$.
Thus, our upper bound is obtained by looking at
all unit balls contained in a ball of radius $2.50586...$.
\end{proof}

\section{Collections of balls with low independence number: an improved lower bound} \label{sec:lower}

We have been unable to determine whether an exponential improvement over the Vitali lower bound,
or even a constant factor improvement, is possible for Euclidean balls. In this section, we report
on partial progress on this question, where we restrict our attention to (unit) ball collections
with a low \emph{independence number}. 

Let $\mathcal{C}$ be a finite collection of unit balls in $\RR^d$. Denote by $\alpha(\mathcal{C})$
its independence number, that is, the independence number of its intersection graph, or equivalently
the maximal cardinality of a disjoint sub-collection of $\mathcal{C}$. Denote by
$\mathrm{vol}(\mathcal{C})$ the volume of the union of the balls in $\mathcal{C}$ and by
$\mathrm{diam}(\mathcal{C})$ the diameter of their union.  

The constant $f(B^d)$ can be reinterpreted as the largest constant such that \[
\alpha(\mathcal{C})\omega_d \geq f(B^d)\mathrm{vol}(\mathcal{C})\quad
\Longleftrightarrow \quad \mathrm{vol}(\mathcal{C})\leq f(B^d)^{-1}\alpha(\mathcal{C})\omega_d, 
\]
where $\omega_d$ is the volume of the unit Euclidean ball in $\RR^d$. E.g., the Vitali lower bound
is equivalent to $\mathrm{vol}(\mathcal{C})\leq 3^d\alpha(\mathcal{C})\omega_d$ for any collection
$\mathcal{C}$. The following propositions yield better volume upper bounds for collections with
independence number $1$ and $2$.  

\begin{proposition} [$\alpha=1$]\label{prop:alpha_1}
  If $\alpha(\mathcal{C})=1$, then $\mathrm{vol}(\mathcal{C})\leq 2^d\omega_d$.
\end{proposition}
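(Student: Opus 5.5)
If $\alpha(\mathcal{C})=1$, every two balls of $\mathcal{C}$ intersect. Since they are unit balls, their centers $x_1,\ldots,x_N$ are pairwise at distance at most $2$, so the center set $X$ has diameter at most $2$. The union of the balls is $X+B^d$, the Minkowski sum of $X$ with the unit ball. Replacing $X$ by its convex hull $\mathrm{conv}(X)$ only enlarges this sum and does not change the diameter. So it suffices to show that a convex body $L$ of diameter at most $2$ satisfies $|L+B^d|\leq 2^d\omega_d = |2B^d|$.

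First, I use that $L$ has diameter at most $2$, so its difference body satisfies $L-L\subseteq 2B^d$. Since $L-L$ is symmetric, its mean width is twice the mean width $w(L)$ of $L$, and the containment gives $2w(L)=w(L-L)\leq w(2B^d)=4$. Hence $w(L)\leq 2=w(2B^d)$.

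Second, I bound the volume of $L+B^d$. Urysohn's inequality says that among convex bodies with a given mean width the ball has the largest volume: $|M|\leq \omega_d\,(w(M)/2)^d$. Mean width is additive under Minkowski sums, so $w(L+B^d)=w(L)+2\leq 4$. Applying Urysohn to $M=L+B^d$ gives $|L+B^d|\leq \omega_d\, 2^d$, which is the claimed bound $\mathrm{vol}(\mathcal{C})\leq 2^d\omega_d$.

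An alternative route is the isodiametric inequality combined with Brunn--Minkowski, but that does not close by itself. The argument above is clean provided Urysohn's inequality and the mean-width additivity are quoted correctly. The only potential obstacle is the step $w(L)\leq 2$, and the difference-body containment handles it. Equality is attained by the example in Section~\ref{sec:general_bodies}: all unit balls with centers in $B^d$ pairwise intersect, and their union is $2B^d$.
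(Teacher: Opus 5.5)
Your proof is correct, but it takes a heavier route than the paper's. The paper works directly on the union. By the triangle inequality any two of its points are at distance at most $1+2+1=4$, so $\mathrm{diam}(\mathcal{C})\le 4$. The isodiametric inequality, which holds for arbitrary sets and so needs no convexification, then gives $\mathrm{vol}(\mathcal{C})\le 2^d\omega_d$ at once.

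So your remark that the isodiametric route ``does not close by itself'' is mistaken. It only stalls if you apply it to the center hull $L$ and then try to reach $L+B^d$ via Brunn--Minkowski, which bounds Minkowski sums from below, not above.

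Instead, you convexify and combine Urysohn's inequality with Minkowski additivity of mean width. The mean width of a convex body never exceeds its diameter, so Urysohn implies the isodiametric inequality for convex bodies; you are invoking a stronger theorem for the same conclusion. Your step through $L-L$ could also be skipped, since every directional width of $L+B^d$ is at most $\mathrm{diam}(L+B^d)\le 4$. A small point: $w(L-L)=2w(L)$ follows from additivity together with $w(-L)=w(L)$, not from the symmetry of $L-L$.

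What your approach buys is the finer estimate $\mathrm{vol}(\mathcal{C})\le \omega_d\,(w(L)/2+1)^d$, in terms of the mean width of the hull of the centers. This can be much better, because $w(L)$ can be far below $2$ even when $\mathrm{diam}(L)=2$. For the proposition as stated, though, the paper's two-line isodiametric argument is the more economical one.
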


\begin{proof}
  If $\alpha(\mathcal{C})=1$, each pair of balls in $\mathcal{C}$ intersects.
  In particular, $\mathrm{diam}(\mathcal{C})\leq 4$.
  By the isodiametric inequality, see, \eg, \cite[Thm. 11.2.1]{Burago-Zalgaller-1988},
  the set of diameter $\leq 4$ of largest volume is the ball of radius~$2$. This gives the thesis.   
\end{proof} 

Proposition~\ref{prop:alpha_1} shows that the density $2^{-d}$ in the easy upper bound~\eqref{eq:easy_upper_bound_2}
is achieved by any collection of pairwise intersecting unit balls,
not only by collections of balls sharing a point. Notice that the two conditions are equivalent for
collections of cubes\footnote{Axis-parallel cubes satisfy the $2$-Helly property, that is, any such collection
  of pairwise intersecting cubes has a non-empty intersection. See~\cite{He23}.}, but not for balls.

\begin{proposition}[$\alpha=2$]\label{prop:alpha_2}
  If $\alpha(\mathcal{C})=2$, then $\mathrm{vol}(\mathcal{C})\leq \max\{(1+\sqrt{3})^d,
  3\cdot 2^d\}\omega_d$ ($=(1+\sqrt{3})^d\omega_d$ if $d\geq 3$). 
\end{proposition}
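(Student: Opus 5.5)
The plan is to reduce everything to one geometric fact about a pair of disjoint balls. Write $X$ for the set of centers of the balls in $\mathcal{C}$. Two balls are disjoint exactly when their centers are at distance $>2$. The hypothesis $\alpha(\mathcal{C})=2$ then says two things: there exist $a,b\in X$ with $|a-b|>2$, and there are no three points of $X$ pairwise at distance $>2$.

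The key lemma, which I expect to be the engine of the proof, is a parallelogram-law computation. Suppose $|a-b|>2$, $|c-a|\le 2$ and $|c-b|\le 2$, and let $m=(a+b)/2$. Then
\[
|c-m|^2=\tfrac12\bigl(|c-a|^2+|c-b|^2\bigr)-\tfrac14|a-b|^2<4-1=3 .
\]
Hence the unit ball centered at $c$ lies in the ball of radius $1+\sqrt3$ centered at $m$. This is where the constant $(1+\sqrt3)^d\omega_d$ comes from.

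Next I would fix a pair $a,b\in X$ realizing the maximal distance $D>2$ among centers, and split $X$ into three classes.
\begin{itemize}
\item $X_0$ consists of the centers within distance $2$ of both $a$ and $b$.
\item $X_a$ consists of $a$ together with the centers at distance $>2$ from $b$.
\item $X_b$ is defined symmetrically.
\end{itemize}
Since there are no three pairwise far points, every center is within $2$ of $a$ or of $b$, so these classes cover $X$.

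The three classes are controlled as follows. By the lemma, the balls centered in $X_0$ lie in $B(m,1+\sqrt3)$. The points of $X_a\setminus\{a\}$ are pairwise within distance $2$, since otherwise two of them together with $b$ would give three pairwise disjoint balls. They are also within $2$ of $a$. Thus the balls centered in $X_a$ pairwise intersect, and Proposition~\ref{prop:alpha_1} bounds their union by $2^d\omega_d$. The same holds for $X_b$.

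In the degenerate regime where $X_0$ is empty or negligible, this yields the bound $2^d\omega_d+2^d\omega_d$ plus the contribution of the balls centered at $a$ and $b$. Refining the accounting (the balls of $a$ and $b$ counted once, the two cliques possibly each carrying a full $2^d\omega_d$) is what I expect produces the alternative term $3\cdot 2^d\omega_d$, which is relevant only in low dimension.

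The main obstacle is that this decomposition naively gives the sum $(1+\sqrt3)^d\omega_d+2\cdot 2^d\omega_d$ rather than the maximum. To upgrade it, I would use the maximality of $D$. A center $c\in X_a\setminus\{a\}$ satisfies $2<|c-b|\le D$ and $|c-a|\le 2$. I would try to show that such balls, and also the balls at $a$ and $b$ themselves, are largely contained in $B(m,1+\sqrt3)$ whenever $X_0$ is large. Alternatively, I would show that one can rechoose the far pair (for instance a pair $c\in X_a$, $c'\in X_b$ with $|c-c'|>2$, if one exists) so that all centers are within $2$ of both points of the new pair, putting every ball inside a single ball of radius $1+\sqrt3$.

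If no such pair exists, then $X_a\cup X_b$ splits into few cliques of the intersection graph. In that case I would bound the union by a small number of applications of Proposition~\ref{prop:alpha_1}, aiming at the $3\cdot 2^d\omega_d$ term. The final comparison $(1+\sqrt3)^d\ge 3\cdot 2^d$ for $d\ge 3$ is then routine.
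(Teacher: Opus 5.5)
Your decomposition into $X_0$, $X_a$, $X_b$ is the right one, and your argument that $X_a$ and $X_b$ are cliques is correct. However, the proof stops at the crux. As you say yourself, you only reach $(1+\sqrt3)^d\omega_d+2\cdot 2^d\omega_d$, and the repairs you list are not carried out.

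The containment repair cannot work. Take centers $a,b,c$ with $|a-b|=|c-b|=2\sqrt3$ and $|a-c|=2$. Then $\alpha(\mathcal{C})=2$ and $D=2\sqrt3$, yet $|c-m|^2=\tfrac12(4+12)-3=5$, so the ball at $c$leaves $B(m,1+\sqrt3)$. In fact the triangle $abc$ is acute with circumradius $6/\sqrt{11}>\sqrt3$, so the union lies in no ball of radius $1+\sqrt3$ at all, whatever pair you rechoose. You need a volume bound that does not come from containment.

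The missing idea is to split according to $D$, and to use your parallelogram identity with the actual value of $|a-b|$ rather than only $|a-b|>2$.

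If $D\le 2\sqrt3$, the union has diameter $D+2\le 2+2\sqrt3$. The isodiametric inequality then gives $\mathrm{vol}(\mathcal{C})\le(1+\sqrt3)^d\omega_d$ at once, with no decomposition at all.

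If $D>2\sqrt3$, then for $c\in X_0$ we have
\[
|c-m|^2\le \tfrac12(4+4)-\tfrac14 D^2<4-3=1,
\]
so every ball centered in $X_0$ contains $m$. Hence $X_0$ is also a clique, and Proposition~\ref{prop:alpha_1} bounds each of the three classes by $2^d\omega_d$, for a total of $3\cdot 2^d\omega_d$.

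So the $(1+\sqrt3)^d$ term comes from the isodiametric inequality in the small-$D$ regime. The $3\cdot 2^d$ term comes from the large-$D$ regime, where the middle class collapses to a clique, not from a degenerate case with $X_0$ empty.

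Finally, the comparison you call routine is false at $d=3$: $(1+\sqrt3)^3=10+6\sqrt3\approx 20.4<24=3\cdot 2^3$. The maximum equals $(1+\sqrt3)^d$ only for $d\ge 4$; the parenthetical in the statement has the same slip.
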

\begin{proof}
  Let $B_1$ and $B_2$ be two balls with maximally separated centers $x_1$ and $x_2$.
  Denote $D=|x_2-x_1|$ and notice that $\mathrm{diam}(\mathcal{C})=D+2$. Since $\alpha>1$, $D>2$.
  By the isodiametric inequality, $\mathrm{vol}(\mathcal{C})\leq \left(\frac{D}{2}+1\right)^d\omega_d$.
  If $D\leq 2\sqrt{3}$, the proof is complete. 

  Assume now that $D>2\sqrt{3}$. We partition $\mathcal{C}$ into three sub-collections:
  $\mathcal{C}_1$, consisting of balls intersecting $B_1$ and disjoint from $B_2$;
  $\mathcal{C}_2$, consisting of balls intersecting $B_2$ and disjoint from $B_1$;
  $\mathcal{C}_{12}$, consisting of balls intersecting both $B_1$ and $B_2$.
  Notice that the three sub-collections exhaust $\mathcal{C}$, because $\alpha(\mathcal{C})=2$.
  For the same reason, $\alpha(\mathcal{C}_j)=1$ for $j=1,2$, and by Proposition~\ref{prop:alpha_1},
  $\mathrm{vol}(\mathcal{C}_j)\leq 2^d\omega_d$ for $j=1,2$. 

  We claim that $\alpha(\mathcal{C}_{12})=1$, by proving the stronger fact that each $B\in \mathcal{C}_{12}$
  contains the mid-point of $x_1$ and $x_2$ (the centers of $B_1$ and $B_2$). From this it will follow that
  $\mathrm{vol}(\mathcal{C}_{12})\leq 2^d\omega_d$, giving a total
  \[ \mathrm{vol}(\mathcal{C}) \leq 3\cdot 2^d\omega_d. \]
  
  Let us prove the claim. Fix $B\in \mathcal{C}_{12}$ and denote by $x$ its center.
  Then $|x-x_j|\leq 2$ ($j=1,2$). We show that $B$ contains the mid-point $m$ of the segment
  connecting $x_1$ and $x_2$.
  Let \[a=|x_1-m|=|x_2-m|, \quad b=|x-m|, \quad c_j=|x_j-x| \, (j=1,2) \]
  and denote by $\theta$ the angle $\widehat{x_1mx}$. Applying the cosine rule to the two triangles
  $x_1mx$ and $x_2mx$ we get
  \[ a^2+b^2-2ab\cos\theta=c_1^2\leq 4, \quad a^2+b^2+2ab\cos\theta=c_2^2\leq 4. \]
  Notice that $a=D/2>\sqrt{3}$ by assumption, so taking the average of the two identities we get
  \[ 3+b^2<a^2+b^2\leq 4, \]
  from which it follows that $b<1$, that is, $m\in B$.
\end{proof}

Let $V(\alpha)$ denote the maximum volume of a collection of unit balls with independence number at most
$\alpha$. Write $\sigma= \sqrt3 +1$. We thus have 
\[ V(1) \leq 2^d \omega_d, \text{ and } V(2) \leq \sigma^d \omega_d. \]
Our next proposition uses the argument of Proposition~\ref{prop:alpha_2} to obtain a constant factor
improvement over the Vitali lower bound,  
at least for $\alpha \leq 0.1 d$ (as recorded in Corollary \ref{cor:small_alpha}).

\begin{proposition}[small $\alpha$]
  Let $d\geq 3$ and $\alpha(\mathcal{C}) \geq 2$. 
 Then
  \[ V(\alpha) \leq \left( 2^{\alpha-2} \sigma^d + (2^{\alpha-2} -1) 2^d \right) \omega_d .\]
\end{proposition}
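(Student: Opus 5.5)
Call the right-hand side $W(\alpha)\omega_d$, where $W(\alpha)=2^{\alpha-2}\sigma^d+(2^{\alpha-2}-1)2^d$. I will prove $V(\alpha)\le W(\alpha)\omega_d$ by induction on $\alpha\ge 2$. The base case $\alpha=2$ reads $V(2)\le \sigma^d\omega_d$. This is Proposition~\ref{prop:alpha_2} for collections with $\alpha(\mathcal{C})=2$, using $d\ge3$. If instead $\alpha(\mathcal{C})=1$, Proposition~\ref{prop:alpha_1} gives the bound $2^d\omega_d\le\sigma^d\omega_d$, so it also holds.

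A quick check of the recursion: $2W(\alpha-1)+2^d=2^{\alpha-2}\sigma^d+(2^{\alpha-1}-2+1)2^d=W(\alpha)$. So the inductive step I aim for is
\[
V(\alpha)\le 2V(\alpha-1)+2^d\omega_d,
\]
modelled on the proof of Proposition~\ref{prop:alpha_2}.

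Fix a collection $\mathcal{C}$ with $\alpha(\mathcal{C})\le\alpha$, where $\alpha\ge3$. If $\alpha(\mathcal{C})\le\alpha-1$, induction already gives the bound, since $W$ is increasing. Otherwise pick two balls $B_1,B_2\in\mathcal{C}$ whose centers $x_1,x_2$ are at maximal distance $D$; then $D>2$, so $B_1\cap B_2=\emptyset$. Split $\mathcal{C}$ into three parts:
\begin{itemize}
\item $\mathcal{C}_{12}$, the balls meeting both $B_1$ and $B_2$;
\item $\mathcal{C}_1'$, the balls disjoint from $B_2$;
\item $\mathcal{C}_2'$, the balls disjoint from $B_1$ but meeting $B_2$.
\end{itemize}
These exhaust $\mathcal{C}$.

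Each of $\mathcal{C}_1'\setminus\{B_1\}$ and $\mathcal{C}_2'\setminus\{B_2\}$ has independence number at most $\alpha-1$. The reason for the first: a disjoint subfamily of balls disjoint from $B_2$ can be enlarged by $B_2$, so it has at most $\alpha-1$ members; the second is symmetric. Hence each has union volume at most $V(\alpha-1)$. Here I must take care that $B_1$ itself does not enlarge $\mathcal{C}_1'$: I put $B_1$ into $\mathcal{C}_1'$, which is harmless because $B_1$ is also disjoint from $B_2$.

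For $\mathcal{C}_{12}$ I want the bound $2^d\omega_d$. When $D>2\sqrt3$, the midpoint argument of Proposition~\ref{prop:alpha_2} applies verbatim: every ball in $\mathcal{C}_{12}$ contains the midpoint $m$, so their union lies in the ball of radius $2$ about $m$. Summing the three parts gives $V(\alpha)\le 2V(\alpha-1)+2^d\omega_d$.

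The main obstacle is the case $D\le2\sqrt3$, where the midpoint argument fails. There I would use the isodiametric bound as in Proposition~\ref{prop:alpha_2}: the union has diameter $D+2\le 2\sigma$, so $\mathrm{vol}(\mathcal{C})\le\sigma^d\omega_d\le W(\alpha)\omega_d$. That closes the case outright, independently of $\alpha$.

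The remaining technical points are:
\begin{itemize}
\item the monotonicity $V(\alpha-1)\le W(\alpha-1)\omega_d$ for collections with independence number strictly less than $\alpha-1$, which holds because $W$ is increasing;
\item the check that maximal center distance makes every ball meeting both $B_1$ and $B_2$ have center within distance $2$ of both $x_1$ and $x_2$, which is exactly the input to the cosine-rule computation.
\end{itemize}
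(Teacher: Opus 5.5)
Your argument is the paper's: the same maximally separated pair $B_1,B_2$, the same case split at $D=2\sqrt3$ (isodiametric inequality below it, midpoint lemma above it), and the same three-way split giving $V(\alpha)\le 2V(\alpha-1)+2^d\omega_d$, unrolled from $\alpha=2$. Your bookkeeping is slightly cleaner: your split is a genuine partition, the induction is explicit, and you address the monotonicity of $W$.

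There are two small slips. In the recursion check the middle term should be $(2^{\alpha-2}-2+1)2^d$. Removing $B_1$ and $B_2$ is unnecessary, and read literally it leaves their volume unaccounted for. The sets $\mathcal{C}_1'$ and $\mathcal{C}_2'$ already have independence number at most $\alpha-1$ as they stand, because $B_1$ is disjoint from $B_2$.

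There is, however, one step that genuinely fails, and you inherit it from the paper: the base case when $d=3$. Proposition~\ref{prop:alpha_2} gives only $V(2)\le\max\{\sigma^d,3\cdot 2^d\}\,\omega_d$. The inequality $\sigma^d\ge 3\cdot 2^d$ is equivalent to $(\sigma/2)^d\ge 3$, which holds exactly for $d\ge 4$. At $d=3$ one has $\sigma^3=10+6\sqrt3\approx 20.39<24=3\cdot 2^3$.

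So the parenthetical in Proposition~\ref{prop:alpha_2} is misstated, and your appeal to $d\ge 3$ does not yield $V(2)\le\sigma^3\omega_3$. Nothing else in your argument supplies it: in the case $D>2\sqrt3$ with $\alpha=2$, the three-way split only gives $3\cdot 2^3\,\omega_3$.

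Your inductive step is valid in every dimension. What you have proved is therefore the stated bound for $d\ge4$. For $d=3$ you get the weaker bound $V(\alpha)\le(2^{\alpha+3}-8)\,\omega_3$, obtained by starting the recursion from $V(2)\le 24\,\omega_3$. To fix this, either assume $d\ge 4$ or replace $\sigma^d$ by $\max\{\sigma^d,3\cdot 2^d\}$ in the statement. This is harmless for the high-dimensional application in Corollary~\ref{cor:small_alpha}.
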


\begin{proof}
Let $B_1$ and $B_2$ be two balls with maximally separated centers $x_1$ and $x_2$.
  Denote $D=|x_2-x_1|$ and notice that $\mathrm{diam}(\mathcal{C})=D+2$. Since $\alpha>1$, we have $D>2$.
  By the isodiametric inequality, $\mathrm{vol}(\mathcal{C})\leq \left(\frac{D}{2}+1\right)^d\omega_d$.
  If $D\leq 2\sqrt{3}$, we have $V(\alpha) \leq \sigma^d \omega_d$, completing the proof. 

  Assume now that $D>2\sqrt{3}$. We partition $\mathcal{C}$ into three sub-collections:
  $\C_1$, consisting of balls disjoint from $B_1$;
  $\C_2$, consisting of balls disjoint from $B_2$; and
  $\C_{12}$, consisting of balls intersecting both $B_1$ and $B_2$.
  Notice that the three sub-collections exhaust $\mathcal{C}$.
  We also have $\alpha(\C_1),\alpha(\C_2) \leq \alpha -1$, since otherwise $\C$ would contain more
  than $\alpha$ disjoint balls. 
  By the argument in Proposition~\ref{prop:alpha_2}, bounding from above $\vol(C_{12})$, $V(\alpha)$
  satisfies the following recurrence: 
  \begin{equation} \label{eq:V(alpha)}
  V(\alpha) \leq 2 V(\alpha -1) + 2^d \omega_d. 
  \end{equation}
  The initial conditions in Propositions~\ref{prop:alpha_1} and~\ref{prop:alpha_2} immediately yield
  the required bound.
\end{proof}

Note that
\[  V(\alpha) \leq (2^{\alpha-2} + 2^{\alpha-2} -1) \sigma^d  \omega_d \leq 2^{\alpha-1} \sigma^d \omega_d. \]
As such, $V(\alpha) \leq 0.5 \cdot 3^d\omega_d$ when $\alpha \leq d \log_2{\frac{3}{\sigma}} = 0.134\ldots d$. 
That is, the volume ratio improves that of the iterative greedy algorithm by a factor of at least $2$, that is, 
$f(B^d) \geq 2 \cdot 3^{-d}$, for $\alpha \leq  0.1349\ldots d$. 

\begin{corollary}[small $\alpha$]\label{cor:small_alpha}
  If  $\alpha \leq d \log_2{\frac{3}{\sigma}}$, then $f(B^d) \geq 2 \cdot 3^{-d}$. 
\end{corollary}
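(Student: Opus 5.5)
The corollary should be read as a statement about collections: for every finite collection $\mathcal{C}$ of unit balls with $\alpha(\mathcal{C})\leq d\log_2\frac{3}{\sigma}$, some disjoint sub-collection occupies at least a fraction $2\cdot 3^{-d}$ of $\mathrm{vol}(\mathcal{C})$. Here "unit" is harmless: by scale invariance in the definition of $f$, any collection of congruent balls can be rescaled to unit radius. The plan is to take a maximum disjoint sub-collection $\mathcal{S}$, so $|\mathcal{S}|=\alpha(\mathcal{C})=:\alpha$, and show that
\[
\mathrm{vol}(\mathcal{C})\leq \tfrac12 3^d\alpha\,\omega_d .
\]
Since the union of $\mathcal{S}$ has volume $\alpha\omega_d$, this gives exactly the ratio $2\cdot 3^{-d}$.

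First, I would dispose of small $\alpha$ separately. For $\alpha=1$, Proposition~\ref{prop:alpha_1} gives $\mathrm{vol}(\mathcal{C})\leq 2^d\omega_d\leq \frac12 3^d\omega_d$, which holds for $d\geq 2$. For $\alpha\geq 2$ with $d\geq3$, the preceding proposition and the remark after it give
\[
\mathrm{vol}(\mathcal{C})\leq V(\alpha)\leq 2^{\alpha-1}\sigma^d\omega_d .
\]
For that remark I would check the step $2^{\alpha-2}2^d\leq 2^{\alpha-2}\sigma^d$, which holds because $\sigma>2$.

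The key comparison is then $2^{\alpha-1}\sigma^d\leq \frac12 3^d$, i.e.\ $2^{\alpha}\leq (3/\sigma)^d$. This is equivalent to $\alpha\leq d\log_2(3/\sigma)$, which is precisely the hypothesis. So in fact $\mathrm{vol}(\mathcal{C})\leq \frac12 3^d\omega_d\leq \frac12 3^d\alpha\,\omega_d$, and this bound is even stronger than needed by a factor of $\alpha$.

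Two points need care. The first is small dimensions: the hypothesis allows $\alpha\geq1$ only when $d\log_2(3/\sigma)\geq 1$, i.e.\ $d\geq 8$. So the condition $d\geq 3$ needed to invoke the proposition is automatic whenever the statement is not vacuous. The second, and the only genuine obstacle, is logical rather than computational: the recurrence \eqref{eq:V(alpha)} is stated for $V(\alpha)$ as a maximum over collections with independence number at most $\alpha$. I would verify that the sub-collections $\mathcal{C}_1,\mathcal{C}_2$ in its proof indeed have independence number at most $\alpha-1$, and that $\mathcal{C}_{12}$ has volume at most $2^d\omega_d$ via the midpoint argument of Proposition~\ref{prop:alpha_2}. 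The latter needs $D>2\sqrt3$, which is exactly the case in which the recurrence is invoked. Once this is confirmed, the corollary follows by the arithmetic above.
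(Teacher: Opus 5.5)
Your proposal is correct and follows the paper's own argument: the remark preceding the corollary likewise bounds $V(\alpha)\leq 2^{\alpha-1}\sigma^d\omega_d$ via the recurrence \eqref{eq:V(alpha)}, and then uses $2^{\alpha}\leq (3/\sigma)^d$ to get $\mathrm{vol}(\mathcal{C})\leq \frac12 3^d\omega_d\leq \frac12 3^d\alpha\,\omega_d$. Your extra checks are correct bookkeeping that the paper leaves implicit: the case $\alpha=1$ via Proposition~\ref{prop:alpha_1}, and the observation that $d\geq 3$ is automatic because a non-vacuous hypothesis forces $d\geq 8$.
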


\section{Concluding remarks} \label{sec:remarks}

\subsection{Algorithmic aspects}

It is worth noting that while computing a maximum independent set in a disk graph is $\NP$-complete,
a $(1+\eps)$-approximation can be computed in polynomial time for a fixed $\eps$.
That is, there is a \emph{polynomial time approximation scheme}  (PTAS)
for \emph{maximum weight independent set}  ({\textsc MWIS})
in disk graphs, provided that a disk representation of the graph is given.
The running-time for achieving this approximation  for a disk graph with $n$ disks is $n^{O(\eps^{1-d})}$~\cite{Ch03}
or $n^{O(\eps^{2-2d})}$~\cite{EJS05}. Interestingly enough, these algorithms proceed by finding suitable
hierarchical subdivisions of the space; for instance, the algorithm in~\cite{EJS05} partitions the given
balls into a logarithmic number of levels according to their diameters and uses a recursive
subdivision of the space that allows one to apply a shifting strategy on all levels simultaneously.
By some analogy, the proof of the almost sharp Vitali covering lemma for axis-parallel cubes  (\ie,
Theorem~\ref{thm:F-vs-f}) likewise also partitions the given balls into a small number of levels.

\subsection{Open problems}

We conclude with a few open problems regarding the main subject:

\begin{enumerate}  \itemsep 4pt

\item  Recall that $F(Q^2) \leq \frac{1}{4} - \frac{1}{384}$, as established in~\cite{Bereg_Dumitrescu_Jiang2010b}.
  Can this be improved? This question is part of problem~D6 of~\cite{Croft_Falconer_Guy1991}. 

\item Recall that $f(B^2) \geq \frac{\pi}{8 \sqrt3}$, as established by R. Rado
  (see Theorem 10 of~\cite[Theorem 10]{Rado1949I}). Can this be improved? 

\item It is known that $F(Q^2) \geq 1/\lambda_\textup{square}$, where $\lambda_\textup{square} = 8.4797\ldots$;
  and that $F(B^2) \geq 1/\lambda_\textup{disk}$, where $\lambda_\textup{disk} = 8.3539 \ldots$.
  Both estimates are from~\cite{Bereg_Dumitrescu_Jiang2010b}. Can these bounds be improved? 

\item Prove or disprove that $F(B^2) =1/4$, as conjectured in~\cite{Bereg_Dumitrescu_Jiang2010a}.

\item Prove or disprove that $f(B^2) = F(B^2)$. Prove or disprove that $f(B^d) = F(B^d)$ for every $d \geq 2$. 
Note that $f(B^1) = F(B^1)=1/2$. 

\item Prove or disprove that $F(Q^d) < 2^{-d}$ for every $d \geq 3$. This is also part of Problem~D6
  in~\cite{Croft_Falconer_Guy1991}. 

\item By Theorem~\ref{thm:upper},  $f(B^d) = O(2.447^{-d})$.
  Can this be improved? 

\item \label{key} Is there a constant $b<3$ such that $f(B^d)\geq b^{-d}$ for large $d$? If the answer is yes,
  by Theorem~\ref{thm:F-vs-f} the same would follow for $F(B^d)$. 
To the best of our knowledge, no bound is even conjectured. More modestly, deduce a constant-factor
improvement to  $f(B^d) \geq 3^{-d}$. For instance, prove or disprove that $f(B^d) \geq 2 \cdot 3^{-d}$,
cf.~Corollary~\ref{cor:small_alpha}. 

\end{enumerate}

We regard the first part of $(\ref{key})$ as our main open problem.

\end{document}